\newtheorem{theorem}{Theorem}[section]
\newtheorem{lemma}[theorem]{Lemma}
\newtheorem{proposition}[theorem]{Proposition}
\theoremstyle{definition}
\newtheorem{definition}[theorem]{Definition}
\newtheorem{example}[theorem]{Example}
\theoremstyle{remark}
\newtheorem{remark}[theorem]{Remark}
\DeclareMathOperator{\Pre}{Pref}
\DeclareMathOperator{\Pkh}{PV}
\DeclareMathOperator{\Ff}{Fact}
\newcommand{\pref}[1]{\Pre ( #1 )}
\newcommand{\pv}[1]{\Pkh ( #1 )}
\newcommand{\pvn}[1]{\Pkh_{m} ( #1 )}
\newcommand{\Nn}{\mathbb N}
\newcommand{\zz}{\mathbb Z}
\newcommand{\Pp}{\mathcal P}
\newcommand{\Aa}{\mathcal A}
\newcommand{\vv}[1]{\bar{\mathbf{#1}}}
\numberwithin{equation}{section}
\begin{document}

\title[Aperiodic pseudorandom number generators]{Aperiodic pseudorandom number generators based on infinite words}


\author[\v{L}. Balkov\'a]{\v{L}ubom\'ira Balkov\'a}
\address{Department of Mathematics, FNSPE, Czech Technical University in Prague,
Trojanova 13, 120 00 Praha 2, Czech Republic}
\email{lubomira.balkova@gmail.com}

\author[M. Bucci]{Michelangelo Bucci}
\address{Department of Mathematics, University of Li\`ege, Grande traverse 12 (B37), B-4000 Li\`ege, Belgium}
\curraddr{}
\email{michelangelo.bucci@gmail.com}
\thanks{}

\author[A. De Luca]{Alessandro De Luca}
\address{DIETI, Universit\`a degli Studi di Napoli Federico II\\via Claudio, 21,
80125 Napoli, Italy}
\curraddr{}
\email{alessandro.deluca@unina.it}
\thanks{}

\author[J. Hladk\'y]{Ji\v{r}\'i Hladk\'y}
\address{Department of Mathematics, FNSPE, Czech Technical University in Prague,
Trojanova 13, 120 00 Praha 2, Czech Republic}
\curraddr{}
\email{hladky.jiri@gmail.com}
\thanks{}

\author[S. Puzynina]{Svetlana Puzynina}
\address{LIP, ENS Lyon, France, and Sobolev Institute of Mathematics, Russia}
\curraddr{LIP, ENS Lyon, 46 All\'ee d'Italie Lyon 69364 France }
\email{s.puzynina@gmail.com}
\thanks{}

\subjclass[2010]{68R15}

\date{}

\dedicatory{}

\begin{abstract}
In this paper we study how certain families of aperiodic infinite
words can be used to produce aperiodic pseudorandom number
generators (PRNGs) with good statistical behavior. We introduce
the \emph{well distributed occurrences} (WELLDOC) combinatorial
property for infinite words, which guarantees
absence of the lattice structure defect in related
pseudorandom number generators. An infinite word $u$ on a $d$-ary
alphabet has the WELLDOC property if, for each factor $w$ of $u$,
positive integer $m$, and vector $\mathbf v\in\zz_{m}^{d}$, there
is an occurrence of $w$ such that the Parikh vector of the prefix
of $u$ preceding such occurrence is congruent to $\mathbf v$
modulo $m$. (The Parikh vector of a~finite word $v$ over an alphabet $\mathcal A$ has its $i$-th component equal to the number of occurrences of the $i$-th letter of $\mathcal A$ in $v$.)
We prove that Sturmian words, and more generally
Arnoux-Rauzy words and some morphic images of them, have the WELLDOC
property. Using the TestU01~\cite{TestU01} and
PractRand~\cite{PractRand} statistical tests, we moreover show
that not only the lattice structure is absent, but also other
important properties of PRNGs are improved when linear
congruential generators are combined using infinite words having
the WELLDOC property.
\end{abstract}

\maketitle




%
%
%
%
%
%
%

\section*{Introduction}
Pseudorandom number generators aim to produce random numbers using
a deterministic process. No wonder they suffer from many defects.
The most usual ones -- linear congruential generators -- are known
to produce periodic sequences with a defect called the lattice
structure. Guimond et al.~\cite{GuPaPa} proved that when two
linear congruential generators are combined using infinite words
coding certain classes of quasicrystals or, equivalently, of
cut-and-project sets, the resulting sequence is aperiodic and has
no lattice structure. For some other related results concerning
aperiodic pseudorandom generators we refer to \cite{GuPa,GuPaPa1}.
We mention that although the lattice structure is considered as a
defect of a random number generator, it can be useful in some
applications for approximation of the uniform distribution
\cite{referee2}.

We have found a combinatorial condition --  \emph{well distributed
occurrences}, or WELLDOC for short -- that also guarantees absence
of the lattice structure in related pseudorandom generators. The
WELLDOC property for an infinite word $u$ over an alphabet $
{\mathcal A}$ means that for any integer $m$ and any factor $w$ of
$u$, the set of Parikh vectors modulo $m$ of prefixes of $u$
preceding the occurrences of $w$ coincides with $\mathbb Z_m^{|
{\mathcal A}|}$ (see Definition~\ref{comb_cond}). In other words,
among Parikh vectors modulo $m$ of such prefixes one has all
possible vectors.
 Besides giving generators without lattice structure, the WELLDOC property is an interesting combinatorial property of infinite words itself. 
We prove that the WELLDOC property holds for the family of Sturmian
words, and more generally for Arnoux-Rauzy words.


Sturmian words constitute a well studied family of infinite
aperiodic words. 
Let $u$ be an infinite word, i. e., an infinite sequence of
elements from a finite set called an alphabet. The {\it (factor)
complexity} function 
counts the number of distinct factors of $u$ of length $n.$ A
fundamental result of Morse and Hedlund \cite{MoHe1} states that a
word $u$ is eventually periodic if and only if for some $n$ its
complexity is less than or equal to $n$. 
Infinite words of complexity $n+1$ for all $n$ are called {\it
Sturmian words,} and hence they are aperiodic words of the
smallest complexity. The most studied Sturmian word is the
so-called Fibonacci word
\[01001010010010100101001001010010\ldots\]
fixed by the morphism $0\mapsto 01$ and $1\mapsto 0$. (See
Section~\ref{CoW} for formal definitions.) The first systematic
study of Sturmian words was given by Morse and Hedlund
in~\cite{MorHed1940}. Such sequences arise naturally in many
contexts, and admit various types of characterizations of
geometric and combinatorial nature (see, e.g., \cite{Lo}).

Arnoux-Rauzy words were introduced in \cite{ArRa} as natural
extensions of Sturmian words to multiliteral alphabets (see
Definition \ref{defAR}). Despite the fact that they were
introduced as generalizations of Sturmian words, Arnoux-Rauzy
words display a much more complex behavior. In particular, we have
two different proofs of the WELLDOC property for Sturmian words, and
only one of them can be generalized to Arnoux-Rauzy words. In the
sequel we provide both of them.

An infinite word with the WELLDOC property is then used to combine two
linear congruential generators and form an infinite aperiodic
sequence with good statistical behavior. Using the
TestU01~\cite{TestU01} and PractRand~\cite{PractRand} statistical
tests, we have moreover shown that not only the lattice structure
is absent, but also other important properties of PRNGs are
improved when linear congruential generators are combined using
infinite words having the WELLDOC property.

The paper is organized as follows. In the next section, we give
some background on pseudorandom number generation. 
Next, in Section~2, we
give the basic combinatorial definitions needed for our main
results, including the WELLDOC property, and we 
prove that the WELLDOC property of $u$ guarantees absence of the
lattice structure of the PRNG based on $u$. In Sections~3 and~4,
we prove that the property holds for Sturmian and Arnoux-Rauzy
words. Finally, in Section 5, we present results of empirical
tests of PRNGs based on words having the WELLDOC property.

A preliminary version of this paper~\cite{WDO}, using the acronym
\emph{WDO} instead of WELLDOC, was presented at the WORDS 2013
conference.

\section{Pseudorandom Number Generators and Lattice Structure}\label{CoABoIW}
For the sake of our discussion, any infinite sequence of integers
can be understood as a \emph{pseudorandom number generator
(PRNG)}; see also~\cite{GuPaPa}.
The generators the most widely used in the past -- linear congruential generators -- are known to suffer from a~defect called the lattice structure (they possess it already from dimension $2$ as shown in~\cite{Marsaglia}).

Let $Z=(Z_n)_{n \in \mathbb N}$ be a PRNG whose output is a~finite set $M \subset \mathbb N$.
We say that $Z$ has the \emph{lattice structure} if there exists $t \in \mathbb N$ such that the set $$\{(Z_i,Z_{i+1}, \dots, Z_{i+t-1})\bigm | i \in \mathbb N\}$$ is covered by a~family of parallel equidistant hyperplanes and at the same time, this family does not cover the whole lattice $$M^t=\{(A_1,A_2,\dots, A_t)\bigm | A_i \in M \ \text{for all $i \in \{1,\dots, t\}$}\}.$$

Recall that a~\emph{linear congruential generator} (LCG) $(Z_n)_{n \in \mathbb N}$ is given by parameters $a, m, c \in \mathbb N$ and defined by the recurrence relation $Z_{n+1}=aZ_n+c \mod m$.
Let us mention a~famous example of a~LCG whose lattice structure is striking.
For $t=3$, the set of triples of RANDU, i.e., $\{(Z_i, Z_{i+1}, Z_{i+2})\bigm | i \in \mathbb N\}$
is covered by only $15$ parallel equidistant hyperplanes, see Figure~\ref{RANDU}.
\begin{figure}[!h]
\begin{center}
\resizebox{\textwidth}{!}{\includegraphics{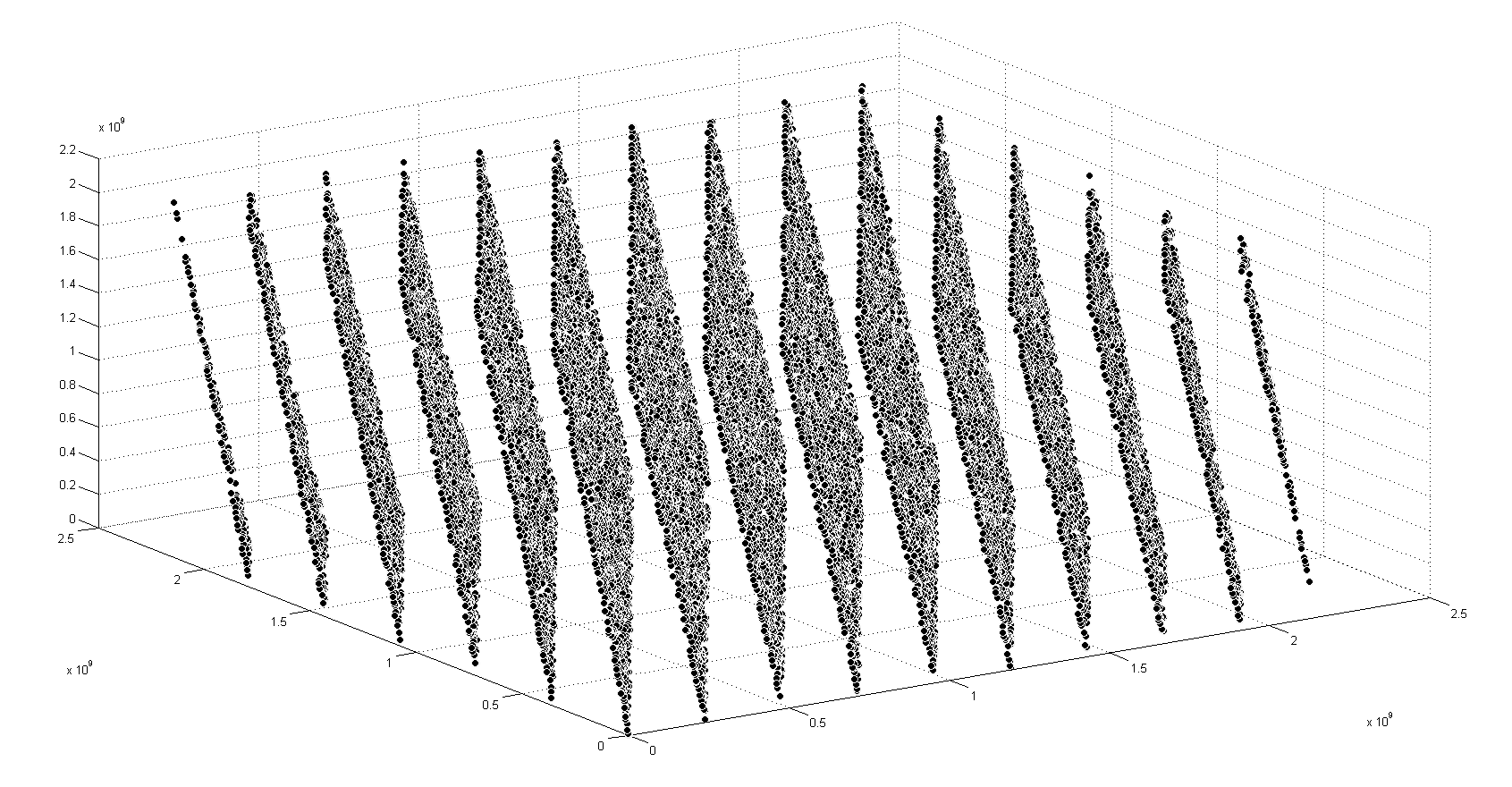}}
\caption{The triples of RANDU -- the LCG with $a=(2^{16}+3), m=2^{31}, c=0$ -- are covered by as few as $15$ parallel equidistant planes.}\label{RANDU}
\end{center}
\end{figure}

In the paper of Guimond et al.~\cite{GuPaPa}, a~restricted version of the following sufficient condition for the absence of the lattice structure is formulated.
\begin{proposition}\label{za_zb}
Let $Z$ be a~PRNG whose output is a~finite set $M \subset \mathbb N$ containing at least two elements. Assume there exists for any $A,B \in M$ and for any $\ell \in \mathbb N$  an $\ell$-tuple $(A_1,A_2,\dots, A_\ell)$ such that both $(A_1,A_2,\dots, A_\ell, A)$ and $(A_1,A_2,\dots, A_\ell,B)$ are $(\ell+1)$-tuples of the generator $Z$. Then $Z$ does not have the lattice structure.
\end{proposition}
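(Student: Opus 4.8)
The plan is to argue by contradiction. Suppose that $Z$ does have the lattice structure, witnessed by some dimension $t$: there is a family of parallel equidistant hyperplanes covering the set $S_t=\{(Z_i,\dots,Z_{i+t-1})\mid i\in\mathbb N\}$ of all $t$-tuples of $Z$, yet this family misses at least one point of $M^t$. First I would record the family algebraically: a family of parallel equidistant hyperplanes in $\mathbb R^t$ is exactly the set $\{\mathbf x\in\mathbb R^t \mid \langle\mathbf n,\mathbf x\rangle\in\alpha+\beta\mathbb Z\}$ for some nonzero normal vector $\mathbf n=(n_1,\dots,n_t)$, some offset $\alpha\in\mathbb R$, and some spacing $\beta>0$. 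Thus a point $\mathbf x$ is covered if and only if $\langle\mathbf n,\mathbf x\rangle\in\alpha+\beta\mathbb Z$, and two covered points $\mathbf x,\mathbf x'$ satisfy $\langle\mathbf n,\mathbf x-\mathbf x'\rangle\in\beta\mathbb Z$. The goal becomes showing that, under the branching hypothesis, every point of $M^t$ is covered, contradicting the assumption that the family misses part of $M^t$.

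The heart of the argument is the following claim, proved by downward induction on $j$: for every $j\in\{1,\dots,t\}$ and all $A,B\in M$ one has $n_j(A-B)\in\beta\mathbb Z$. To prove the step for a given $j$ (the case $j=t$ being the base case, with an empty tail below), I would apply the hypothesis with $\ell=j-1$ to the pair $A,B$, obtaining a common context $(A_1,\dots,A_{j-1})$ such that both $(A_1,\dots,A_{j-1},A)$ and $(A_1,\dots,A_{j-1},B)$ are $j$-tuples of $Z$, say occurring at positions $i_A$ and $i_B$. Extending each occurrence to a full $t$-tuple of $Z$ produces
\[
U=(A_1,\dots,A_{j-1},A,Z_{i_A+j},\dots,Z_{i_A+t-1}),\qquad
V=(A_1,\dots,A_{j-1},B,Z_{i_B+j},\dots,Z_{i_B+t-1}),
\]
which agree on the first $j-1$ coordinates, differ by $A-B$ in coordinate $j$, and have possibly different tails lying in $M$. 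Since $U,V\in S_t$ are both covered, $\langle\mathbf n,U-V\rangle\in\beta\mathbb Z$, i.e.
\[
n_j(A-B)+\sum_{k=j+1}^{t}n_k\bigl(Z_{i_A+k-1}-Z_{i_B+k-1}\bigr)\in\beta\mathbb Z.
\]
By the induction hypothesis each summand with $k>j$ already lies in $\beta\mathbb Z$, so $n_j(A-B)\in\beta\mathbb Z$, completing the step.

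With the claim in hand the conclusion is immediate: for any two points $\mathbf y,\mathbf y'\in M^t$ we get $\langle\mathbf n,\mathbf y-\mathbf y'\rangle=\sum_j n_j(y_j-y'_j)\in\beta\mathbb Z$, so the linear form $\langle\mathbf n,\cdot\rangle$ takes values in a single coset of $\beta\mathbb Z$ on all of $M^t$. Since at least one $t$-tuple of $Z$ lies in $M^t$ and is covered, that coset is exactly $\alpha+\beta\mathbb Z$, whence every point of $M^t$ is covered --- contradicting the defining property of the lattice structure. As this holds for every candidate dimension $t$, the generator $Z$ has no lattice structure. The one genuinely delicate point is that the branching hypothesis only lets us prescribe the \emph{last} symbol after a common context, so it gives no control over the coordinates following the divergence; arranging the induction to run from the last coordinate downward is exactly what neutralizes those uncontrolled tails via the previously established cases.
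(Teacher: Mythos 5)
Your proof is correct and follows essentially the same route as the paper's: your downward induction on the coordinate index is the paper's ``largest bad index'' contradiction (Lemma~\ref{skok_smer_i}) in disguise, your claim $n_j(A-B)\in\beta\mathbb Z$ is equivalent to the paper's statement that the hyperplane spacing along each non-orthogonal coordinate direction divides $\lambda=\gcd\{A-B \mid A,B\in M\}$, and both arguments neutralize the uncontrolled tails after the branching coordinate using the information already secured for higher indices. The only differences are presentational: your congruence formulation $\langle\mathbf n,\mathbf x\rangle\in\alpha+\beta\mathbb Z$ treats the orthogonal case $n_j=0$ uniformly and dispenses with the gcd~$\lambda$, which slightly streamlines the write-up relative to the paper's two-step geometric argument.
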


\begin{remark}
Proposition ~\ref{za_zb} can be reformulated in terms of
combinatorics on words (see Section~\ref{CoW}) as follows: Let $Z$
be a~PRNG whose output is a~finite set $M \subset \mathbb N$
containing at least two elements. If for any $A,B\in M$ and any
length $\ell$ $Z$ has a~right special factor of length $\ell$ with
right extensions $A$ and $B$, then $Z$ does not have the lattice
structure.
\end{remark}

Since Proposition~\ref{za_zb} is formulated for a~restricted class
of generators in~\cite{GuPaPa} (see Lemma 2.3 ibidem), we will
provide its proof. However, we point out that all ideas of the
proof are taken from~\cite{GuPaPa}. We start with an auxiliary
lemma.

Let us denote $\lambda = \gcd\{A-B \mid A,B \in M \}.$

\begin{lemma}\label{skok_smer_i}
Let $Z$ be a~PRNG satisfying all assumptions of Proposition~\ref{za_zb}. Let $\vv n$ be the unit normal vector of a~family of parallel equidistant
hyperplanes covering all $t$-tuples of $Z$. Assume $\vv e_i$ (the $i$-th vector of the canonical basis of the Euclidean space $\mathbb R^t$) is not orthogonal to $\vv n$.
Then the distance $d_i$ of adjacent hyperplanes in the family along $\vv e_i$ is of the form $\lambda/k$ for some $k \in \mathbb N$.
\end{lemma}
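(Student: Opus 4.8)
The plan is to work with the affine description of the hyperplane family and to reduce everything to a divisibility statement about the coordinates of $\vv n$. Since the hyperplanes are parallel, equidistant, and have unit normal $\vv n$, they are exactly the level sets $\{\vv x \in \mathbb R^t : \vv n\cdot \vv x = c_0 + md\}$ for $m\in\zz$, where $d>0$ is their common perpendicular spacing. Hence if $\vv x,\vv y$ are any two $t$-tuples of $Z$ (each lying on some hyperplane of the family), then $\vv n\cdot(\vv x-\vv y)\in d\zz$. Writing $n_j=\vv n\cdot \vv e_j$, the hypothesis $\vv e_i\not\perp\vv n$ means $n_i\neq 0$; moving along $\vv e_i$ from one hyperplane to the next covers perpendicular distance $d$, so $d_i|n_i|=d$, i.e.\ $d_i=d/|n_i|$. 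Thus it suffices to show that $\lambda n_i$ is an integer multiple of $d$.

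The core step is the claim that for every index $j\in\{1,\dots,t\}$ and all $A,B\in M$ one has $(A-B)n_j\in d\zz$. I would prove this by downward induction on $j$, from $j=t$ to $j=1$. Fix $A,B\in M$ and apply the hypothesis of Proposition~\ref{za_zb} with $\ell=j-1$: it yields a common prefix $(A_1,\dots,A_{j-1})$ such that both $(A_1,\dots,A_{j-1},A)$ and $(A_1,\dots,A_{j-1},B)$ occur in $Z$. Since $Z$ is infinite, each occurrence extends to a full window of length $t$, giving $t$-tuples $\vv x=(A_1,\dots,A_{j-1},A,C_{j+1},\dots,C_t)$ and $\vv y=(A_1,\dots,A_{j-1},B,D_{j+1},\dots,D_t)$ with $C_k,D_k\in M$. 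These agree on the first $j-1$ coordinates, so
$$\vv n\cdot(\vv x-\vv y)=(A-B)n_j+\sum_{k=j+1}^{t}(C_k-D_k)n_k\in d\zz .$$
By the induction hypothesis each summand $(C_k-D_k)n_k$ with $k>j$ already lies in $d\zz$, whence $(A-B)n_j\in d\zz$, completing the induction.

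It then remains to pass from single differences to $\lambda$. By B\'ezout, $\lambda=\gcd\{A-B\mid A,B\in M\}$ is a $\zz$-linear combination of the differences $A-B$, so $\lambda n_i$ is the same $\zz$-linear combination of the numbers $(A-B)n_i\in d\zz$ and therefore $\lambda n_i=k'd$ for some integer $k'$; as $n_i\neq 0$, $\lambda>0$ and $d>0$, we have $k'\neq 0$. Substituting $d=\lambda n_i/k'$ into $d_i=d/|n_i|$ gives $d_i=\lambda/|k'|$, so $k:=|k'|\in\Nn$ yields $d_i=\lambda/k$, as required.

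The \textbf{main obstacle} is the core step: I need to realize the prescribed single-coordinate difference $A-B$ in coordinate $j$ of two genuine $t$-tuples while keeping the remaining coordinates under control. Proposition~\ref{za_zb} only produces tuples that agree up to their last entry, so the trailing coordinates $k>j$ of the two extended windows are a priori arbitrary; the device that makes the argument go through is the downward induction, which lets me subtract off precisely those trailing contributions because they have already been shown to lie in $d\zz$.
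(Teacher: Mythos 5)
Your proof is correct, and it runs on the same combinatorial engine as the paper's: invoke the hypothesis of Proposition~\ref{za_zb} to get two length-$t$ windows of $Z$ sharing their first $j-1$ entries and carrying $A$, resp.\ $B$, in position $j$, then neutralize the uncontrolled trailing entries using information about coordinates of index larger than $j$. The packaging, however, is genuinely different. The paper argues by contradiction: it takes the \emph{largest} index $\ell$ whose spacing is not of the form $\lambda/k$ and works geometrically, hopping between hyperplanes --- it overwrites the trailing entries of the second window with those of the first (a move by a multiple of $\lambda$ along a ``good'' direction, or any move along an orthogonal one, stays within the family), so the two points end up differing only in coordinate $\ell$, forcing $d_\ell$ to divide every $A-B$ and hence $\lambda$, a contradiction. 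You replace the maximal-counterexample contradiction by a direct downward induction (logically the same skeleton, since a maximal-criminal argument is an induction in disguise) and replace the hyperplane-hopping by scalar-product arithmetic: writing the family as $\vv n\cdot\vv x\in c_0+d\zz$, the trailing contributions $(C_k-D_k)n_k$ vanish modulo $d\zz$ by the induction hypothesis, and an explicit B\'ezout step upgrades the individual divisibilities $(A-B)n_i\in d\zz$ to $\lambda n_i\in d\zz$, giving $d_i=d/|n_i|=\lambda/k$. What your route buys is transparency: no tuples need to be modified, the divisibility bookkeeping is explicit, and the formula $d_i=d/|n_i|$ (which the paper proves in its preceding remark) does all the geometric work. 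What the paper's route buys is reusability: the geometric observations it isolates --- moves along orthogonal directions stay on the same hyperplane, moves by $\lambda$ along good directions stay in the family --- are exactly the ones recycled afterwards in the proof of Proposition~\ref{za_zb} itself. One shared fine point: for $j=1$ your induction needs the hypothesis with $\ell=0$ (i.e., that every element of $M$ occurs in $Z$), but the paper's proof needs the same degenerate case when its maximal bad index is $1$, so this is not a defect of your argument relative to the original.
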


\begin{remark}
The distance $d_i$ of adjacent hyperplanes $W_0, W_1$ along $\vv e_i$ means $|x_i-y_i|$ for any $\vv x \in W_0$ and $\vv y \in W_1$, where the $j$-th components of $\vv x$ and $\vv y$ satisfy $x_j=y_j$ for all $j \in \{1,\dots,t\}, j\not =i$.
This is a~well defined term because the hyperplanes in the family are of the form $W_j\equiv \vv x\cdot \vv n=\alpha+jd, \ j \in \mathbb Z$,
where $d$ is the distance of adjacent hyperplanes in the family and $\cdot$ denotes the standard scalar product.
Thus, without loss of generality, consider the adjacent hyperplanes
$$W_0\equiv \vv x \cdot \vv n=\alpha \quad \text{and} \quad W_1 \equiv \vv x \cdot \vv n=\alpha+d. $$
Then for any $\vv x \in W_0$ and $\vv y=\vv x+s\vv e_i$ from $W_{1}$, we have
$$\begin{array}{rcl}
\vv y \cdot \vv n&=&\alpha+d=\vv x\cdot \vv n+d,\\
\vv y\cdot \vv n&=&\vv x\cdot \vv n+s\vv e_i\cdot \vv n=\vv x\cdot \vv n+sn_i,
\end{array}$$
where $n_i$ is the $i$-th component of $\vv n$. Consequently,
$d_i=|s|=\left|\frac{d}{n_i}\right|$ and is the same for any
choice of $\vv x$ and $\vv y$ which differ only in their $i$-th
component and belong to adjacent hyperplanes.
\end{remark}
\begin{proof}[Proof of Lemma~\ref{skok_smer_i}]
Let us start with a useful observation.
Let $\vv z$ belong to a~hyperplane $W$ of the family in question.
\begin{enumerate}
\item If $\vv e_j$ is orthogonal to $\vv n$, then we may change the $j$-th component of $\vv z$ in an arbitrary way and the resulting vector will belong to the same hyperplane, i.e., if $W\equiv \vv x\cdot \vv n=\alpha$, then clearly $(\vv z+\beta\vv e_j)\cdot\vv n=\vv z\cdot\vv n=\alpha$ for any $\beta \in \mathbb R$, thus $\vv z+\beta\vv e_j$ belongs to $W$.
\item If $\vv e_j$ is not orthogonal to $\vv n$ and the distance $d_j$ of adjacent hyperplanes along $\vv e_i$ in the family is of the form $\lambda/k$ for some $k \in \mathbb N$, then $\vv z+r\lambda\vv e_j$ belongs to the family for any $r \in \mathbb Z$.
    This follows from a~repeated application of the fact that if $\vv z$ belongs to a~hyperplane $W$, then $\vv z+\frac{\lambda}{k}\vv e_j$ belongs to an adjacent hyperplane of $W$.
\end{enumerate}
Let us proceed by contradiction, i.e., we assume that there exists
$i\in \{1,\dots, t\}$ such that $\vv e_i$ is not orthogonal to
$\vv n$ and the distance along $\vv e_i$ of adjacent hyperplanes
of the family in question is not of the form $\lambda/k, \
k \in \mathbb N$. Take the largest of such indices and denote it
by $\ell$. Choose $A,B \in M$ arbitrarily. According to
assumptions, there exists an $(\ell -1)$-tuple $(A_1, A_2, \dots,
A_{\ell-1})$ such that both $(A_1, A_2, \dots, A_{\ell-1}, A)$ and
$(A_1, A_2, \dots, A_{\ell-1}, B)$ are $\ell$-tuples of $Z$. It is
therefore possible to find two $t$-tuples of $Z$ such that the
first one is of the form $(A_1, A_2, \dots, A_{\ell-1}, A,
A_{\ell+1}, \dots, A_t)$ and the second one of the form $(A_1,
A_2, \dots, A_{\ell-1}, B, {\hat A}_{\ell+1}, \dots, {\hat A}_t)$.
These two $t$-tuples -- considered as vectors in $\mathbb R^t$ --
belong by the assumption of Lemma~\ref{skok_smer_i} to some
hyperplanes in the family. Since all vectors $\vv e_j, \ j \in
\{\ell+1, \dots, t\}$ are either orthogonal to $\vv n$ or the
distance of adjacent hyperplanes along $\vv e_j$ is of the form
$\lambda/k$ for some $k \in \mathbb N$, we can change the
last $t-\ell$ coordinates ${\hat A}_{\ell+1}, \dots, {\hat A}_t$
of the second vector to arbitrary values from $M$ (we transform them
into $A_{\ell+1}, \dots, A_t$) and it will still belong to
a~hyperplane in the family. This is a~consequence of the
observation at the beginning of this proof. Hence, both vectors
$(A_1, A_2, \dots, A_{\ell-1}, A, A_{\ell+1}, \dots, A_t)$ and
$(A_1, A_2, \dots, A_{\ell-1}, B, A_{\ell+1}, \dots, A_t)$ belong
to some hyperplanes of the family. Their distance along $\vv
e_{\ell}$ equals $|A-B|$, i.e., $d_\ell$ divides $A-B$. Since $A,
B$ have been chosen arbitrarily, it follows that $d_\ell$ divides
$\lambda$, i.e., $\lambda=k d_\ell$ for some $k \in \mathbb N$,
which is a~contradiction with the choice of $\vv e_\ell$.
\end{proof}

\begin{proof}[Proof of Proposition~\ref{za_zb}]
Let $\vv n$ be the unit normal vector of a~family of parallel equidistant hyperplanes covering all $t$-tuples of $Z$.
Suppose without loss of generality that $\vv e_1, \dots, \vv e_{\ell}$ are not orthogonal to $\vv n$ and $\vv e_{\ell+1}, \dots, \vv e_t$ are orthogonal to $\vv n$.
Let $\vv z=(Z_n, Z_{n+1}, \dots, Z_{n+t-1})$ be a~$t$-tuple of $Z$, thus $\vv z$ belongs to one of the hyperplanes.
Take any vector $\vv y \in M^t$ and let us show that it belongs to a~hyperplane in the family.
\begin{enumerate}
\item Any vector from $M^t$ which differs from $\vv z$ only in the first ${\ell}$ components belongs to a~hyperplane of the family.
This comes from Lemma~\ref{skok_smer_i} because when we change for $i \in \{1,\dots, \ell\}$ the $i$-th component of $\vv z$ by $d_i=\frac{\lambda}{k}$, then we jump on the adjacent parallel hyperplane. So, any transformation of the $i$-th component of $\vv z$ into another value from $M$ means a~finite number of jumps from one hyperplane onto another. Hence, we may transform $\vv z$ so that it has the first $\ell$ components equal to $\vv y$ and the obtained vector $\vv x$ belongs to a~hyperplane in the family.
\item Any vector from $M^t$ which differs from $\vv x$ only in the last $t-{\ell}$ components belongs to the same hyperplane as $\vv x$.
This comes from the orthogonality $\vv e_i \perp \vv n$ for $i>{\ell}$ (the argument is the same as in the proof of Lemma~\ref{skok_smer_i}). Since $\vv y$ differs from $\vv x$ only in the last $t-{\ell}$ components, $\vv y$ belongs to a~hyperplane in the family.
\end{enumerate}
\end{proof}

\section{Combinatorics on Words and the WELLDOC Property}\label{CoW}

\subsection{Backgrounds on Combinatorics on Words}
In the following, ${\mathcal A}$ denotes a~finite set of symbols
called \emph{letters}; the set ${\mathcal A}$ is therefore called
an \emph{alphabet}. A~\emph{finite word} is a finite string
$w=w_1w_2\dots w_{n}$ of letters from ${\mathcal A}$; its length
is denoted by $|w| = n$ and $|w|_a$ denotes the number of
occurrences of $a \in { {\mathcal A}}$ in $w$. The empty word, a
neutral element for concatenation of finite words, is denoted
$\varepsilon$ and it is of zero length. The set of all finite
words over the alphabet ${\mathcal A}$ is denoted by $\mathcal
A^*$.

Under an \emph{infinite word} we understand an infinite sequence
${ u}=u_0u_1u_2\dots $ of letters from ${\mathcal A}$. A~finite word $w$ is
a~\emph{factor} of a~word $v$ (finite or infinite) if there exist
words $p$ and $s$ such that $v= pws$. If $p = \varepsilon$, then
$w$ is said to be a~\emph{prefix} of $v$; if $s = \varepsilon$,
then $w$ is a~\emph{suffix} of~$v$. The set of factors and
prefixes of $v$ are denoted by $\Ff(v)$ and $\pref v$,
respectively. If $v=ps$ for finite words $v,p,s$, then we write
$p=vs^{-1}$ and $s=p^{-1}v$.

An infinite word $ u$ over the alphabet $ {\mathcal A}$ is called \emph{eventually periodic} if it is of the form $ u=vw^{\omega}$,
where $v$, $w$ are finite words over $ {\mathcal A}$ and $\omega$ denotes an infinite repetition. An infinite word is called \emph{aperiodic} if it is not eventually periodic.

For any factor $w$ of an infinite word ${ u}$, every
index $i$ such that $w$ is a prefix of the infinite word
$u_iu_{i+1}u_{i+2} \dots$ is called an \emph{occurrence}
of $w$ in ${ u}$.
An infinite word $u$ is \emph{recurrent} if each of its factors has infinitely many
occurrences in $u$.

The \emph{factor complexity} of an infinite word ${ u}$ is
a~map $\mathcal{C}_u: \mathbb{N} \mapsto \mathbb{N}$ defined
by $\mathcal{C}_u(n):=\text{the number of factors of length $n$ contained in $ u$}$.
The factor complexity of eventually periodic words is bounded, while the factor complexity of an aperiodic word $ u$ satisfies $\mathcal{C}_u(n)\geq n+1$ for all $n \in \mathbb N$.
A~\emph{right extension} of a~factor $w$ of $ u$ over the alphabet $ {\mathcal A}$ is any letter $a\in {\mathcal A}$ such
that $wa$ is a~factor of $u$. Of course, any factor of
${ u}$ has at least one right extension. A~factor $w$ is
called \emph{right special} if $w$ has at least two right
extensions. Similarly, one can define a~\emph{left extension} and
a~\emph{left special} factor. A factor is \emph{bispecial} if it is both right and left
special.
An aperiodic word contains right special factors of any length.

The \emph{Parikh vector} of a finite word $w$ over an alphabet $\{0, 1, \dots, d-1\}$ is defined as $(|w|_0, |w|_1, \dots,
|w|_{d-1} )$. 
For a finite or infinite word $u= u_0 u_1 u_2 \dots$,
$\Pre_n u$ will denote the prefix of length $n$ of $u$, i.e.,
$\Pre_n u = u_0 u_1 \dots u_{n-1}$. 

In some of the examples we consider are morphic words. A
\emph{morphism} is a function $\varphi : \mathcal{A}^*\to
\mathcal{B}^*$ such that $\varphi(\varepsilon) = \varepsilon$ and
$\varphi(wv) = \varphi(w)\varphi(v)$, for all $w, v \in {\mathcal
A}^*$. Clearly, a morphism is completely defined by the images of
the letters in the domain. A morphism is \emph{prolongable} on $a
\in \mathcal{A}$, if $|\varphi(a)|\geq 2$ and
$a$ is a prefix of $\varphi (a)$. 
If $\varphi $ is prolongable on
$a$, then $\varphi^n(a)$ is a proper prefix of $\varphi^{n+1}(a)$,
for all $n \in \mathbb{N}$. Therefore, the sequence
$(\varphi^n(a))_{n\geq 0}$ of words defines an infinite word $u$
that is a fixed point of $\varphi$. Such a word $u$ is a (pure) \emph{morphic} word.

Let us introduce a~combinatorial condition on infinite words that -- as we will see later -- guarantees no lattice structure for the associated PRNGs.
\begin{definition}[The WELLDOC property]\label{comb_cond}
We say that an aperiodic infinite word $u$ over the alphabet
$\{0,1,\ldots ,d-1\}$  has \emph{well distributed occurrences} (or
has \emph{the WELLDOC property}) if for any $m \in \mathbb N$ and
any factor $w$ of $ u$ the word $ u$ satisfies the following
condition. If $i_0, i_1, \dots$ denote the occurrences of $w$ in $
u$, then
\[\left\{\left(|\Pre_{i_j} u|_0, \ldots, |\Pre_{i_j} u|_{d-1}\right)
\bmod m \mid j \in \mathbb N\right\}=\zz_m^d\,;\]
that is, the Parikh vectors of $\Pre_{i_{j}}u$ for $j\in\Nn$,
when reduced modulo $m$, give the whole set $\zz_{m}^{d}$.
\end{definition}

We define the WELLDOC property for aperiodic words since it
clearly never holds for periodic ones. It is easy to see that if a
recurrent infinite word $u$ has the WELLDOC property, then for
every vector $\mathbf v\in \zz_{m}^{d}$ there are infinitely many
values of $j$ such that the Parikh vector of $\Pre_{i_j}u$ is
congruent to $\mathbf v$ modulo~$m$.

\begin{example} The Thue-Morse word $$u=01101001100101101001011001101001\cdots,$$ which is a fixed point of the morphism $0\mapsto 01$, $1 \mapsto 10$, does not satisfy the WELLDOC property. Indeed, take $m=2$ and $w=00$, then $w$ occurs only in odd positions $i_j$ so
that $(|\Pre_{i_j}u|_0+|\Pre_{i_j}u|_1) = i_j$ is odd. Thus, e.g.,
$$(|\Pre_{i_j}u|_0, |\Pre_{i_j}u|_1)\allowbreak
 \bmod 2\neq (0,0),$$ and hence
$$\{(|\Pre_{i_j}u|_0, |\Pre_{i_j}u|_1) \bmod 2 \mid j \in \mathbb N\} \neq
\zz_2^2.$$
\end{example} 

\begin{example} We say that an infinite word $u$ over an alphabet
${\mathcal A}$, $|{\mathcal A}|=d$, is \emph{universal} if it contains all finite words
over ${\mathcal A}$ as its factors. It is easy to see that any universal word
satisfies the WELLDOC property. Indeed, for any word $w\in {\mathcal A}^*$ and
any $m$ there exists a finite word $v$ such that if
$i_0, i_1, \dots, i_k$ denote the occurrences of $w$ in $v$, then
\[\left\{\left(|{\rm Pref}_{i_j} v|_0, \ldots, |{\rm Pref}_{i_j} v|_{d-1}\right)
\bmod m \mid j \in \{0,1,\dots,k\} \right\}=\zz_m^d\,.\]
Since $u$ is universal, $v$ is a factor of $u$. Denoting by $i$ an
occurrence of $v$ in $u$, one gets that the positions $i+i_j$ are
occurrences of $w$ in $u$. Hence
\begin{eqnarray*}&&\left\{\left(|{\rm Pref}_{i+i_j} u|_0, \ldots,
|{\rm Pref}_{i+i_j} u|_{d-1}\right)
\bmod m \mid j \in \{0,1,\dots,k\}\right\}=\\
&&=\left(|{\rm Pref}_{i} u|_0, \ldots, |{\rm Pref}_{i}
u|_{d-1}\right)+\\&&+\left\{\left(|{\rm Pref}_{i_j} v|_0, \ldots,
|{\rm Pref}_{i_j} v|_{d-1}\right) \bmod m \mid j \in \{0,1,\dots,k\}\right\}=\zz_m^d\,.\end{eqnarray*} \end{example}
Therefore, $u$ satisfies the WELLDOC property.

\subsection{Combination of PRNGs}
In order to eliminate the lattice structure, it helps to combine PRNGs in a~smart way. Such a~method was introduced in~\cite{GuPaPa1}.
Let $X=(X_n)_{n \in \mathbb N}$ and $Y=(Y_n)_{n \in \mathbb N}$ be two PRNGs with the same output $M \subset \mathbb N$ and the same period $m \in \mathbb N$, and let ${ u}=u_0u_1u_2\dots $ be a~binary infinite word over the alphabet
$\{0,1\}$.

\emph{The PRNG $Z=(Z_n)_{n \in \mathbb N}$
based on $u$} is obtained by the following algorithm:
\begin{enumerate}
\item Read step by step the letters of $u$.
\item When you read $0$ for the $i$-th time, copy the $i$-th symbol from $X$ to the end of the constructed sequence $Z$.
\item When you read $1$ for the $i$-th time, copy the $i$-th symbol from $Y$ to the end of the constructed sequence $Z$.
\end{enumerate}
This construction can be generalized for non-binary alphabets:
Using infinite words over a multiliteral alphabet, one can combine
more than two PRNGs. Remark that following terminology from
\cite{ss}, the sequence $Z$ is obtained as a \emph{shuffle} of the
sequences $X$ and $Y$ with the steering word $u$.

In order to distinguish between generators and infinite words used for their combination, we always denote generators with capital letters $X, Y, Z,\dots$ and words with lower-case letters $u, v, w$ (the same convention is applied for their outputs: $A,B,\dots$ for output values of generators (elements of $M$), $a,b,\dots$ for letters of words). Finite sequences of successive elements $\vv x=(X_i, X_{i+1},\dots, X_{i+t-1})$ of a~PRNG $X$ are called $t$-tuples, or vectors, while in the case of an infinite word $u$, we call $u_iu_{i+1}\dots u_{i+t-1}$ a~factor of length $t$.

\subsection{The WELLDOC Property and Absence of the Lattice Structure}\label{CCGAoLS}
Guimond et al. in~\cite{GuPaPa} have shown that PRNGs based on
infinite words coding a~certain class of cut-and-project sets have
no lattice structure. In the sequel, we will generalize their
result and find larger classes of words guaranteeing no lattice
structure for associated generators. We focus on the binary
alphabet, although everything works for multiliteral words as well
(and for combination of more generators therefore), since the
proofs become more technical in non-binary case.

%


\begin{theorem}\label{my}
Let $Z$ be the PRNG 
based on a~binary infinite word $u$ with the WELLDOC property. Then $Z$ has no lattice structure.
\end{theorem}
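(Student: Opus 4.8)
The plan is to verify the hypotheses of Proposition~\ref{za_zb} for the generator $Z$ based on $u$, since that proposition already reduces absence of lattice structure to a purely combinatorial statement about $t$-tuples. Concretely, I need to show: for any two output values $A, B \in M$ and any $\ell \in \mathbb N$, there exists an $\ell$-tuple $(A_1, \dots, A_\ell)$ of $Z$ such that both $(A_1, \dots, A_\ell, A)$ and $(A_1, \dots, A_\ell, B)$ are $(\ell+1)$-tuples of $Z$. In the language of the Remark following that proposition, it suffices to produce, for every length $\ell$, a right special factor of $Z$ of length $\ell$ admitting $A$ and $B$ as right extensions.

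**Translating the WELLDOC property into a statement about $Z$.**

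First I would unwind the construction of $Z$ from the binary steering word $u$ and the two generators $X, Y$. By definition, the $k$-th symbol $Z_k$ equals $X_{|\Pre_k u|_0}$ if $u_k = 0$, and $Y_{|\Pre_k u|_1}$ if $u_k = 1$; that is, the number of $0$'s (resp.\ $1$'s) read so far indexes into $X$ (resp.\ $Y$). Hence a window $(Z_k, \dots, Z_{k+t-1})$ of $Z$ is completely determined by the factor $u_k \cdots u_{k+t-1}$ of $u$ \emph{together with} the Parikh vector $(|\Pre_k u|_0, |\Pre_k u|_1)$ of the prefix preceding the window, reduced modulo the common period $m$ of $X$ and $Y$ (since $X$ and $Y$ are $m$-periodic, only the index mod $m$ matters). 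This is exactly the data that the WELLDOC property controls.

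**Constructing the right special factor.**

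To realize $A$ and $B$ as two right extensions after a common $\ell$-tuple, I would fix a factor $w$ of $u$ of length $\ell$ and then use the freedom in the Parikh vector of the prefix preceding occurrences of $w$. The idea is to choose a single letter position inside (or just after) the window so that at one occurrence the value $A$ is produced there, and at another occurrence the value $B$ is produced, while the preceding $\ell$ symbols of $Z$ are \emph{identical} at both occurrences. To force the first $\ell$ symbols to agree at two different occurrences $i_j$ and $i_{j'}$ of $w$, I need the Parikh vectors $(|\Pre_{i_j} u|_0, |\Pre_{i_j} u|_1)$ and $(|\Pre_{i_{j'}} u|_0, |\Pre_{i_{j'}} u|_1)$ to be congruent modulo $m$; then the mapping into $X$ and $Y$ is the same at both occurrences and the $\ell$-tuples coincide. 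To simultaneously force the next symbol to differ (giving $A$ versus $B$), I would use a longer window or exploit a special factor of $u$ so that the letter read at position $\ell$ differs, or so that the cumulative index steps by a controlled amount—this is where the full strength of WELLDOC, giving \emph{all} of $\zz_m^2$, is used: I can prescribe the Parikh vector modulo $m$ at an occurrence to hit precisely the residues that send the terminal index of $X$ (or $Y$) to a position outputting $A$, and at another occurrence to a position outputting $B$.

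**The main obstacle.**

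The delicate point—and the step I expect to require the most care—is decoupling the two constraints: keeping the length-$\ell$ prefix of the $Z$-window \emph{fixed} across the two occurrences while making the $(\ell+1)$-st symbol take two \emph{prescribed distinct} values $A$ and $B$. The WELLDOC property gives me full control of the Parikh vector modulo $m$ \emph{at} an occurrence of a chosen factor, but a single Parikh vector simultaneously determines all $t$ symbols of the window, so I cannot vary one symbol independently of the others by brute force. The resolution is to engineer the factor $w$ of $u$ so that the letter immediately following $w$ can be chosen (using a right special factor of $u$, which exists at every length since $u$ is aperiodic), thereby letting the index into $X$ versus $Y$ be nudged without disturbing the already-fixed earlier positions; combined with the residue freedom from WELLDOC this yields the two extensions $A$ and $B$. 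Managing this interaction between the combinatorial structure of $u$ and the periodic structure of $X, Y$ is the crux, after which Proposition~\ref{za_zb} closes the argument immediately.
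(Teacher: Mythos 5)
Your proposal is correct and takes essentially the same route as the paper: reduce to Proposition~\ref{za_zb}, take a right special factor $w$ of $u$ of length $\ell$ (which exists since $u$ is aperiodic), and apply the WELLDOC property to the two extensions $w0$ and $w1$ with the \emph{same} prescribed residue vector, so that the $\ell$-tuple of $Z$ read over $w$ is identical at both occurrences while the next output is routed to $X$ (yielding $A$) or to $Y$ (yielding $B$). The one detail you leave implicit — how a single residue vector can force both terminal values — is settled in the paper by writing $A=X_i$, $B=Y_j$ and choosing the residues $|\Pre_{i_k}u|_0 \equiv i-|w|_0-1$ and $|\Pre_{i_k}u|_1 \equiv j-|w|_1-1 \pmod m$: the constraint producing $A$ involves only the count of $0$'s and the constraint producing $B$ only the count of $1$'s, so the two constraints live on different coordinates and are simultaneously satisfiable.
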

\begin{proof}
According to Proposition~\ref{za_zb}, it suffices to check that its assumptions are met.
%
Let $A,B \in M$ and $\ell \in \mathbb N$. Assume $A=X_i$ and
$B=Y_j$, where $X=(X_n)_{n \in \mathbb N}$ and $Y=(Y_n)_{n \in
\mathbb N}$ are the two combined PRNGs with the same output $M
\subset \mathbb N$ and the same period $m \in \mathbb N$. Consider
a~right special factor $w$ of $u$ of length $\ell$, i.e., both
words $w0$ and $w1$ are factors of $u$ (such a~factor $w$ exists
since $u$ is an aperiodic word because of the
WELLDOC property). By Definition~\ref{comb_cond}, it is possible to
find an occurrence $i_k$ of $w0$ in $u$ such that
$$|\Pre_{i_k}u|_0=i-|w|_0-1\bmod m, \qquad |\Pre_{i_k}u|_1=j-|w|_1-1\bmod m.$$
Reading the word $w0$ at the occurrence $i_k$, the corresponding $\ell$-tuple $(A_1,A_2,\dots, A_\ell)$ of the generator $Z$ consists of symbols
$$X_{(i-|w|_0)\bmod m}, \dots, X_{(i-1) \bmod m}\ \text{and} \ Y_{(j-|w|_1)\bmod m}, \dots, Y_{(j-1) \bmod m}.$$
When reading $0$ after $w$, the symbol $X_i=A$ from the first generator follows $(A_1,A_2,\dots, A_\ell)$.

Again, by Definition~\ref{comb_cond}, there exists an occurrence
$i_s$ of $w1$ in $u$ such that
$$|\Pre_{i_s}u|_0=i-|w|_0-1\bmod m, \qquad |\Pre_{i_s}u|_1=j-|w|_1-1\bmod m.$$
When reading the word $w$ at the occurrence $i_s$, the same $\ell$-tuple $(A_1,A_2,\dots, A_\ell)$ of $Z$ as previously occurs.
This time, however, $(A_1,A_2,\dots, A_\ell)$ is followed by $B$ because we read $w1$ and $Y_j=B$.
Thus, we have found an $\ell$-tuple $(A_1,A_2,\dots, A_\ell)$ of $Z$ followed in $Z$ by both $A$ and $B$.
\end{proof}
\begin{remark}\label{modifiedFib}
The WELLDOC property is sufficient, but not necessary for absence
of the lattice structure. For example, consider a~modified
Fibonacci word $\hat u$ where the letter $2$
is inserted after each letter, 
i.e., $\hat u=0212020212021202\dots$. It is easy to verify that
$\hat u$ does not have well distributed occurrences. 
However, we will show the following: Let $Z$ be the PRNG combining
three generators $X=(X_n)_{n \in \mathbb N}, Y=(Y_n)_{n \in
\mathbb N}$ and $V=(V_n)_{n \in \mathbb N}$ with the same output
$M \subset \mathbb N$ and the same period $m \in \mathbb N$
according to the modified Fibonacci word $\hat u$. Then $Z$ has no
lattice structure.

It suffices to verify assumptions of Proposition~\ref{za_zb}. Let
$A,B \in M$ and $\ell \in \mathbb N$, $\ell$ an even number (the
proof is analogous for odd $\ell$). Assume $A=X_i$ and $B=Y_j$.
Consider a~right special factor $w$ of the Fibonacci word $u$ of
length $\ell/2$. Since $u$ has the WELLDOC property, there exists
an occurrence $i_k$ of $w0$ in $u$ such that
$$|\Pre_{i_k}u|_0=i-|w|_0-1\bmod m, \qquad |\Pre_{i_k}u|_1=j-|w|_1-1\bmod m.$$
Then if we insert the letter $2$ after each letter of $w$, 
we obtain a~right special factor $\hat w$ of the modified Fibonacci word
$\hat u$ of length $\ell$. It holds then that
$$\begin{array}{rcl}|\Pre_{2i_k}\hat u|_0&=&i-|w|_0-1\bmod m=i-|\hat w|_0-1\bmod m,\\
|\Pre_{2i_k}\hat u|_1&=&j-|w|_1-1\bmod m=j-|\hat w|_1-1\bmod m,\\
|\Pre_{2i_k}\hat u|_2&=&i-|w|_0-1+j-|w|_1-1\bmod m=i+j-|\hat w|_2-2\bmod m.
\end{array}$$

When reading the word $\hat w0$ at the occurrence $2i_k$, the corresponding $\ell$-tuple $(A_1,A_2,\dots, A_\ell)$ of the generator $Z$ is followed by the symbol $X_i=A$ from the first generator.

Again, by the WELLDOC property of $u$, there exists an occurrence
$i_s$ of $w1$ in $u$ such that
$$|\Pre_{i_s}u|_0=i-|w|_0-1\bmod m, \qquad |\Pre_{i_s}u|_1=j-|w|_1-1\bmod m.$$
It holds then that
$$\begin{array}{rcl}|\Pre_{2i_s}\hat u|_0&=&i-|w|_0-1\bmod m=i-|\hat w|_0-1\bmod m,\\
|\Pre_{2i_s}\hat u|_1&=&j-|w|_1-1\bmod m=j-|\hat w|_1-1\bmod m,\\
|\Pre_{2i_s}\hat u|_2&=&i-|w|_0-1+j-|w|_1-1\bmod m=i+j-|\hat w|_2-2\bmod m.
\end{array}$$

When reading the word $\hat w$ at the occurrence $2i_s$, the same $\ell$-tuple $(A_1,A_2,\dots, A_\ell)$ of $Z$ as previously occurs.
This time, however, $(A_1,A_2,\dots, A_\ell)$ is followed by $B$ because we read $\hat w1$ and $Y_j=B$.
Thus, we have found an $\ell$-tuple $(A_1,A_2,\dots, A_\ell)$ of $Z$ followed in $Z$ by both $A$ and $B$.
Therefore $Z$ has no lattice structure.

\end{remark}

\begin{remark}
In the proof of Theorem~\ref{my}, the modulus $m$ from the WELLDOC
property is set to be equal to the period of the combined
generators. Therefore, if we require absence of the lattice
structure for a~PRNG obtained when combining PRNGs with a fixed
period $\hat m$, then it is sufficient to use an infinite word $u$
that satisfies the WELLDOC property for the modulus $m=\hat m$.
This means for instance that the Thue-Morse word is not completely
out of the game, but it cannot be used to combine periodic PRNGs
with the period being a~power of $2$.
\end{remark}

We have formulated a~combinatorial condition -- well distributed occurrences -- guaranteeing no lattice structure of the associated generator. It is now important to find classes of words satisfying such a condition.

\section{Sturmian Words}\label{PBoSWHNLS}

In this section we show that Sturmian words have well distributed occurrences.  

\begin{definition} An aperiodic infinite word $u$ is called \emph{Sturmian} if its factor complexity satisfies $\mathcal{C}_{u}(n)=n+1$ for all $n \in \mathbb N$. \end{definition} 

So, Sturmian words are by definition binary and they have the lowest possible factor complexity among aperiodic infinite words.
Sturmian words admit various types of characterizations of
geometric and combinatorial nature. One of such characterizations
is via irrational rotations on the unit circle. In \cite
{MorHed1940} Hedlund and Morse showed that each Sturmian word may
be realized measure-theoretically
by an irrational rotation on the
circle. That is, every Sturmian word is obtained by coding the
symbolic orbit of a point on the circle of circumference one
under a rotation $R_{\alpha}$ by an irrational angle%
\footnote{Measured by arc length (thus equivalent to $2\pi\alpha$ radians).}
 $\alpha$, $0<\alpha<1$, where the circle is
%
partitioned into two complementary intervals, one of length $\alpha $ and the other of
length $1-\alpha .$ Conversely, each such coding gives rise to
a Sturmian word.

\begin{definition} \label{rotation} The \emph{rotation} by angle $\alpha$ is the mapping
$R_{\alpha}$ from $[0,1)$ (identified with the unit circle) to
itself defined by $R_{\alpha}(x)=\{x+\alpha\}$, where
$\{x\}=x-\lfloor x\rfloor$ is the fractional part of $x$. Considering a
partition of $[0,1)$ into $I_0=[0, 1-\alpha)$, $I_1=[1- \alpha,
1)$, define a word
\[s_{\alpha, \rho}(n)=\begin{cases}0 & \mbox{ if }
R^n_{\alpha}(\rho)=\{\rho+n\alpha\} \in I_0, \\ 1 & \mbox{ if }
R^n_{\alpha}(\rho)=\{\rho+n\alpha\} \in I_1.
\end{cases}\]
One can also define $I'_0=(0, 1-\alpha]$, $I'_1=(1- \alpha, 1]$,
the corresponding word is denoted by $s'_{\alpha, \rho}$. \end{definition} 

%

Remark that some but not all Sturmian words are morphic.
In fact, it is known that a characteristic Sturmian word (i.e., $\rho=\alpha$) is morphic if and only if the continuous fraction expansion of $\alpha$ is periodic.
For more information on Sturmian words we refer to
\cite[Chapter~2]{Lo}.

\begin{theorem}\label{sturmian} Let $u$ be a Sturmian word.  Then $u$ has the WELLDOC property.
\end{theorem}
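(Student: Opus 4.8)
The plan is to exploit the realization of a Sturmian word as the coding of an irrational rotation (stated just before the theorem) and to reduce the WELLDOC property to a single equidistribution statement. Fix $m\in\mathbb N$ and a factor $w$ of length $\ell$, and write $u=s_{\alpha,\rho}$ with $0<\alpha<1$ irrational (the case $s'_{\alpha,\rho}$ is identical). Put $x_n=\{\rho+n\alpha\}=R_\alpha^n(\rho)$. First I would record two standard facts. (i) The cylinder $I_w=\bigcap_{k=0}^{\ell-1}R_\alpha^{-k}(I_{w_k})$ is a nonempty subinterval of $[0,1)$ with nonempty interior, and $n$ is an occurrence of $w$ in $u$ if and only if $x_n\in I_w$. (ii) Counting the letter $1$ in a prefix gives the closed form
\[|\Pre_{n}u|_1=\lfloor\rho+n\alpha\rfloor-\lfloor\rho\rfloor=:q_n,\]
obtained by telescoping $\sum_{k<n}\bigl(\lfloor\rho+(k+1)\alpha\rfloor-\lfloor\rho+k\alpha\rfloor\bigr)$, since the floor jumps exactly when $x_k\in I_1=[1-\alpha,1)$, i.e.\ exactly when $u_k=1$.

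Since $|\Pre_{n}u|_0+|\Pre_{n}u|_1=n$, the map $(|\Pre_{n}u|_0,|\Pre_{n}u|_1)\mapsto(n,q_n)$ is an invertible $\zz_m$-linear change of coordinates (determinant $\pm1$), so Theorem~\ref{sturmian} is equivalent to showing that, as $n$ ranges over the occurrences of $w$, the pair $(n\bmod m,\,q_n\bmod m)$ runs through all of $\zz_m^2$. I would prove this by a ``divide by $m$'' trick: set $y_n=\{(\rho+n\alpha)/m\}$, so that $\{m y_n\}=x_n$ and $\lfloor m y_n\rfloor=\lfloor\rho+n\alpha\rfloor\bmod m$. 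Thus both the geometric constraint $x_n\in I_w$ and the residue $q_n\bmod m$ are read off from the \emph{single} circle coordinate $y_n$. Concretely, given a target $(a,b)\in\zz_m^2$ and setting $j=(b+\lfloor\rho\rfloor)\bmod m\in\{0,\dots,m-1\}$, the three requirements $n\equiv a$, $q_n\equiv b$, and $x_n\in I_w$ translate into $n\equiv a\pmod m$ together with $y_n\in J:=(j+I_w)/m$, an interval of positive length $|I_w|/m$.

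Finally I would invoke equidistribution on the compact abelian group $G=\zz_m\times(\mathbb R/\mathbb Z)$. The pair $(n\bmod m,\,y_n)$ is exactly the orbit of $(0,\{\rho/m\})$ under the rotation $(p,y)\mapsto(p+1,\,y+\alpha/m)$ by the element $g=(1,\alpha/m)\in G$. Because $\alpha$ is irrational, $\overline{\langle g\rangle}=G$, so by Weyl's theorem every orbit is equidistributed in $G$; equivalently, for each fixed residue $a$ the points $\{y_n:n\equiv a\pmod m\}=\{\{\rho/m+a\alpha/m\}+t\alpha:t\ge0\}$ are dense in $\mathbb R/\mathbb Z$. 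Hence there are infinitely many $n$ with $n\equiv a\pmod m$ and $y_n\in J$, and any such $n$ is an occurrence of $w$ realizing the Parikh residue $(a,b)$. As $(a,b)$ was arbitrary, $\bigl\{(|\Pre_{i_j}u|_0,|\Pre_{i_j}u|_1)\bmod m\bigr\}=\zz_m^2$, which is the WELLDOC property.

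I expect the only genuinely delicate step to be the decoding in the middle paragraph: correctly matching the residue $q_n\bmod m$ \emph{and} the factor constraint $x_n\in I_w$ to the one coordinate $y_n=\{(\rho+n\alpha)/m\}$, and verifying that the resulting target interval $J$ has nonempty interior—this is precisely where the interval structure of $I_w$, i.e.\ the minimal complexity of Sturmian words, is used in an essential way. Boundary phenomena (when $\rho$ or some $x_n$ hits an endpoint of $I_w$, or the $I_0$ versus $I_0'$ ambiguity) are harmless: equidistribution is with respect to Lebesgue measure, so one may always select $n$ with $y_n$ in the \emph{interior} of $J$.
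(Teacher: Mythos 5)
Your proposal is correct and follows essentially the same route as the paper's proof: the change of coordinates to pairs $(n \bmod m,\, |\Pre_n u|_1 \bmod m)$ is exactly the paper's reformulation (controlling length and the number of $1$'s instead of the Parikh vector), and your ``divide by $m$'' coordinate $y_n=\{(\rho+n\alpha)/m\}$ is the paper's circle of circumference $m$ rescaled by $1/m$, with your density argument for the sub-orbit $\{y_n : n\equiv a \pmod m\}$ under rotation by $\alpha$ matching the paper's density of the rotation by $m\alpha$ on the $m$-circle hitting the interval $I^i_w$. The only cosmetic difference is your packaging via equidistribution on $\zz_m\times(\mathbb R/\mathbb Z)$, which you immediately reduce to the same elementary density statement the paper uses.
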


\begin{proof} In the proof we use the definition of Sturmian word via rotation.
The main idea is controlling the number of $1$'s modulo $m$ by taking circle of length $m$, and controlling the length taking the rotation by $m\alpha$.

For the proof we will use an equivalent reformulation of the
theorem:

\medskip

Let $u$ be a Sturmian word on $\{0,1\}$, for any natural number
$m$ and any factor $w$ of $u$ let us denote $i_0,i_1, \dots$ the
occurrences of $w$ in $u$. Then
\[\left\{ \left( i_j, |\Pre_{i_j}u|_1
\right) \bmod m \mid j \in \Nn \right\}=\mathbb Z_m^2.\]

\medskip

That is, we control the number of $1$'s and the length instead of
the number of $0$'s.

Since a Sturmian word can be defined via rotations by an irrational
angle on a unit circle, without loss of
generality we may assume that $u=s_{\alpha, \rho}$ for some
$0<\alpha<1$, $0\leq \rho <1$,
$\alpha$ irrational (see Definition \ref{rotation}).
Equivalently, we can consider $m$ copies of the circle connected
into one circle of length $m$ with $m$ intervals $I^i_1$ of length $\alpha$ corresponding to $1$. The Sturmian word is
obtained by rotation by $\alpha$ on this circle of length $m$ (see
Fig.~\ref{sturmian_fig}).

\begin{figure}[htb]
\begin{center}
\resizebox{6 cm}{!}{\includegraphics{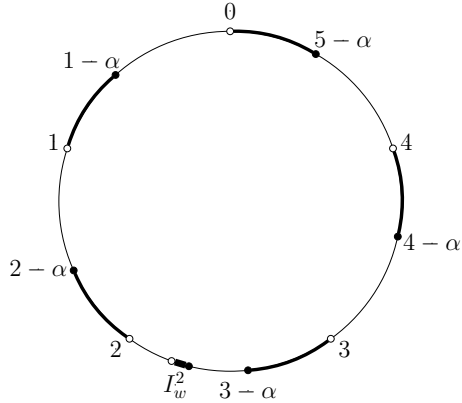}}
\caption{Illustration to the proof of Theorem \ref{sturmian}: the
example for $m=5$. }\label{sturmian_fig}
\end{center}
\end{figure}

Namely, we define the rotation $R_{\alpha,m}$ as the mapping
 from $[0,m)$ (identified with the circle of length $m$) to
itself defined by $R_{\alpha,m}(x)=\{x+\alpha \}_m$, where
$\{x\}_m=x-\lfloor x/m\rfloor m$ and for $m=1$ coincides with the fractional
part of $x$. A~partition of $[0,m)$ into $2m$ intervals $I^i_0=[i,
i+1-\alpha)$, $I^i_1=[i+1- \alpha, i+1)$, $i=0, \dots, m-1$
defines the Sturmian word $u=s_{\alpha, \rho}$:
\[s_{\alpha, \rho}(n)=\begin{cases}0 & \mbox{ if }
R^n_{\alpha,m}(\rho)=\{\rho+n\alpha\}_m \in I^i_0 \mbox{ for some } \hskip2pt i=0,\dots, m-1, \\
1 & \mbox{ if } R^n_{\alpha,m}(\rho)=\{\rho+n\alpha\}_m \in I^i_1
\mbox{ for some } \hskip2pt i=0,\dots, m-1.
\end{cases}\]

It is well known that any factor $w=w_{0}\cdots w_{k-1}$ of $u$ corresponds to an
interval $I_w$ in $[0,1)$%
, so that whenever you
start rotating from the interval $I_w$, you obtain $w$. Namely,
$x\in I_w$ if and only if $x \in I_{w_0},  R_{\alpha}(x) \in
I_{w_1},  \dots, R^{|w|-1}_{\alpha}(x) \in I_{w_{|w|-1}}$.

Similarly, we can define $m$ intervals corresponding to $w$ in
$[0,m)$ (circle of length $m$), so that if $I_w=[x_1,x_2)$, then
$I^i_w=[x_1+i,x_2+i)$, $i=0, \dots, m-1$.

\smallskip

Fix a factor $w$ of $u$, take arbitrary $(j, i) \in
\mathbb Z_m^2$. Now let us organize $(j,i)$ among the
occurrences of $w$, i.e., find $l$ such that $u_l \dots
u_{l+|w|-1} = w$, $l \mod m = j$ and $|{\rm {Pref}}_l u |_1 \mod m
= i$:

Consider rotation $R_{m\alpha,m}(x)$ by $m\alpha$ instead of
rotation by $\alpha$, and start $m$-rotating from $j\alpha+\rho$. 
Formally, $R_{m\alpha,m}(x)=\{x+m\alpha \}_m$, where, as
above, $\{x\}_m=x-[x/m]m$. This rotation will put us to positions
$mk+j$, $k \in \mathbb{N}$, in the Sturmian word: for $a\in\{0,1\}$
one has
$s_{\alpha, \rho}(mk+j)=a$ if
$R^k_{m\alpha,m}(j\alpha+\rho)=\{j\alpha+\rho+km\alpha\}_m \in I^i_a$
for some $i=0,\dots, m-1$.

Remark that the points in the orbit of an $m$-rotation of a point
on the $m$-circle are dense, and hence the rotation comes
infinitely often to each interval. So pick $k$ when
$j\alpha+mk\alpha+\rho \in I^i_w\subset [i,i+1)$ (and actually
there exist infinitely many such $k$).
 Then the length $l$ of the corresponding prefix is equal to $km+ j$, and the number of $1$'s in it is $i+mp$, where $p$ is the number of complete circles you made, i.e.,
$p=[( j\alpha+mk\alpha+\rho)/m]$.

\end{proof}

\section{Arnoux-Rauzy Words}
In this section we show that Arnoux-Rauzy words~\cite{ArRa}, which are natural extensions of Sturmian words to larger alphabets, also satisfy the WELLDOC property. Note that the proof for Sturmian words cannot be generalized to Arnoux-Rauzy words, because it is based on the geometric interpretation of Sturmian words via rotations, while this interpretation does not extend to Arnoux-Rauzy words.

\subsection{Basic Definitions}
The definitions and results we remind in this subsection are well-known and mostly taken from~\cite{ArRa,DJP} and generalize the ones given for binary words in~\cite{deLuca}.

\begin{definition}
Let ${\mathcal A}$ be a finite alphabet. The \emph{reversal operator} is the operator $\sim:{\mathcal A}^* \mapsto {\mathcal A}^*$ defined by recurrence in the following way:
\[ \tilde \varepsilon = \varepsilon, \quad \widetilde{va}=a\tilde v\]
for all $v \in {\mathcal A}^*$ and $a \in {\mathcal A}$. The fixed points of the reversal operator are called \emph{palindromes}. \end{definition}

\begin{definition}
Let $v \in {\mathcal A}^*$ be a finite word over the alphabet ${\mathcal A}$. The \emph{right palindromic closure} of $v$, denoted by $v^{(+)}$, is the shortest palindrome that has $v$ as a prefix. It is readily verified that if $p$ is the longest palindromic suffix of $v=wp$, then $v^{(+)}=wp\tilde{w}$.
\end{definition}

\begin{definition}
We call the \emph{iterated (right) palindromic closure operator} the operator $\psi$ recurrently defined by the following rules:
\[ \psi(\varepsilon)=\varepsilon, \quad \psi(va)=(\psi(v)a)^{(+)}\]
for all $v \in {\mathcal A}^*$ and $a \in {\mathcal A}$.
The definition of $\psi$ may be extended to infinite words $u$ over ${\mathcal A}$ as $\psi(u)=\lim_{n} \psi(\Pre_n u)$,
i.e., $\psi(u)$ is the infinite word having $\psi(\Pre_n u)$ as its prefix for every $n \in \mathbb N$.
\end{definition}

\begin{definition}\label{defAR}
Let $\Delta$ be an infinite word on the alphabet ${\mathcal A}$
such that every letter occurs infinitely often in $\Delta$. The
word $c=\psi(\Delta)$ is then called a \emph{characteristic (or
standard) Arnoux-Rauzy word} and $\Delta$ is called the
\emph{directive sequence} of $c$. An infinite word $u$ is called
an Arnoux-Rauzy word if it has the same set of factors as a
(unique) characteristic Arnoux-Rauzy word, which is called the
characteristic word of $u$. The directive sequence of an
Arnoux-Rauzy word is the directive sequence of its characteristic
word.
\end{definition}

Let us also recall the following well-known characterization (see e.g.~\cite{DJP}): 

\begin{theorem}\label{charAR}
Let $u$ be an aperiodic infinite word over the alphabet ${\mathcal A}$. Then $u$ is a standard Arnoux-Rauzy word if and only if the following hold:
\begin{enumerate}
\item $\Ff(u)$ is closed under reversal (that is, if $v$ is a factor of $u$ so is $\tilde v$).
\item Every left special factor of $u$ is also a prefix.
\item If $v$ is a right special factor of $u$ then $va$ is a factor of $u$ for every $a \in {\mathcal A}$.
\end{enumerate}
\end{theorem}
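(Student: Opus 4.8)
The plan is to prove both implications using the standard dictionary between the iterated palindromic closure $\psi$ and the special-factor structure of $c=\psi(\Delta)$. The single technical fact I would set up first, and use throughout, is the description of the palindromic prefixes of $c$: I would show by induction on $n$ that the words $P_n:=\psi(\Pre_n\Delta)$ are palindromes, are nested prefixes of $c$ with $P_{n+1}=(P_n a)^{(+)}$ where $a$ is the corresponding letter of $\Delta$, and that there is no palindromic prefix strictly between $P_n$ and $P_{n+1}$ (this is immediate from the definition of right palindromic closure as the \emph{shortest} palindrome having a given word as prefix). Consequently the palindromic prefixes of $c$ are exactly the $P_n$, and since $|P_n|\to\infty$, every factor of $c$ occurs inside some $P_n$.

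For the direction ($\Rightarrow$), assume $u$ is standard Arnoux-Rauzy, so $u=c=\psi(\Delta)$ with every letter occurring infinitely often in $\Delta$. Condition (1) is then immediate: any $v\in\Ff(u)$ occurs in some palindromic prefix $P_n$, say $P_n=xvy$; applying the reversal operator and using $\tilde P_n=P_n$ gives $P_n=\tilde y\,\tilde v\,\tilde x$, so $\tilde v\in\Ff(P_n)\subseteq\Ff(u)$. For (2) and (3) I would establish the key structural statement that the left special factors of $u$ are exactly its prefixes and that, because every letter appears infinitely often in $\Delta$, each prefix is left special with all letters of $\mathcal A$ as left extensions. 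Condition (2) is the first half of this statement. Condition (3) then follows by duality: by (1), $v$ is right special if and only if $\tilde v$ is left special, and the right extensions of $v$ are the reversals of the left extensions of $\tilde v$; since $\tilde v$ is a prefix and extends to the left by all letters, $v$ extends to the right by all letters.

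For the direction ($\Leftarrow$), assume (1)--(3) with $u$ aperiodic. Since $u$ is aperiodic it has a right special factor of every length; by (1) the reversal of such a factor is a left special factor, so $u$ has a left special factor of every length, and by (2) each of these is a prefix. As there is a unique prefix of each length, for every $n$ the prefix $\Pre_n u$ is the \emph{unique} left special factor of length $n$. The goal is then to recover a directive sequence $\Delta$ with $u=\psi(\Delta)$. I would do this by proving that the right palindromic closure $p^{(+)}$ of any prefix $p$ of $u$ is again a prefix of $u$: granting this, iterating ``append the next letter of $u$ and take the palindromic closure'' produces exactly the palindromic prefixes of $u$ and reads off the successive directive letters, yielding $u=\psi(\Delta)$; condition (3) then guarantees that every letter of $\mathcal A$ is appended infinitely often, i.e. occurs infinitely often in $\Delta$, which is exactly the requirement that $u$ be Arnoux-Rauzy rather than merely episturmian.

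The main obstacle is this last claim in the ($\Leftarrow$) direction, that palindromic closures of prefixes remain prefixes. This is where the reversal closure (1) and the uniqueness of special factors coming from (2) must be combined delicately: one writes a prefix $p=rq$ with $q$ its longest palindromic suffix, so $p^{(+)}=rq\tilde r$, and must argue, using the unique left special factor of the relevant length together with the reversal-closedness of $\Ff(u)$, that the palindrome $rq\tilde r$ actually occurs as a \emph{prefix} and not merely as a factor. Managing the longest-palindromic-suffix bookkeeping and the unioccurrence of these palindromic prefixes is the delicate heart of the proof; by contrast the ($\Rightarrow$) direction is comparatively routine once the description of the palindromic prefixes and of the left special factors of $\psi(\Delta)$ is in place.
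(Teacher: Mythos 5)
The first thing to note is that the paper contains no proof of Theorem~\ref{charAR}: it is recalled as a well-known characterization with a pointer to \cite{DJP}, so the only meaningful comparison is with the proof in that literature. Your outline follows exactly that route (the iterated-palindromic-closure machinery of Droubay--Justin--Pirillo), and the portions you actually carry out are correct: the nested palindromic prefixes $P_n=\psi(\Pre_n\Delta)$ with no palindromic prefix strictly between consecutive ones; condition (1) via the fact that every factor lies in some palindrome $P_n$; the duality argument deriving (3) from (1) plus the strong form of (2); and, for the converse, the identification of $\Pre_n u$ as the unique left special factor of each length and the reduction of that whole direction to the lemma that $p^{(+)}$ is a prefix of $u$ whenever $p$ is.

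The gaps, however, are genuine, and they sit exactly where the work is. (a) In the forward direction, the ``key structural statement'' that the left special factors of $\psi(\Delta)$ are \emph{exactly} its prefixes is announced (``I would establish'') but never argued. The easy half does follow from your setup: for a prefix $p$ and a letter $a$, pick $m$ with $|P_m|\geq |p|$ and $\Delta_m=a$; then $P_{m+1}=(P_m a)^{(+)}$ begins with $P_m a$ and, being a palindrome, ends with $aP_m$, so $ap\in\Ff(u)$. But the converse half --- every left special factor is a prefix --- is the actual content of condition (2), and you give no argument for it. (b) In the backward direction, the central lemma that palindromic closures of prefixes are prefixes is explicitly left open (``the delicate heart''). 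Moreover, the step as you describe it presupposes that $rq\tilde r$ is already known to be a \emph{factor} of $u$; reversal-closedness only yields the factors $rq$ and $q\tilde r$ separately, and gluing these into a single occurrence of $rq\tilde r$ is precisely the unioccurrence-of-the-longest-palindromic-suffix lemma of \cite{DJP}, which is where hypotheses (1) and (2) are genuinely consumed and which is nowhere proved here. (c) The final claim, that condition (3) forces every letter to occur infinitely often in $\Delta$, also needs an argument relating left extensions of long palindromic prefixes to the letters occurring later in the directive word; it is asserted, not shown. In short, what you have is an accurate roadmap of the known proof with its three load-bearing lemmas missing, so as a proof of the theorem it is incomplete.
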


From the preceding theorem, it can be easily verified that the bispecial factors of a standard Arnoux-Rauzy correspond to its palindromic prefixes (including the empty word), and hence to the iterated palindromic closure of the prefixes of its directive sequence. That is, if
\[ \varepsilon=b_0,b_1,b_2,\ldots \]
is the sequence, ordered by length, of bispecial factors of the standard Arnoux-Rauzy word $u$, $\Delta=\Delta_0\Delta_1\cdots$ its directive sequence (with $\Delta_i \in {\mathcal A}$ for every $i$), we have $b_{i+1}=(b_i\Delta_i)^{(+)}$.

A direct consequence of this, together with the preceding definitions, is the following statement, which will be used in the sequel.
\begin{lemma} \label{lemmabispecials}
Let $u$ be a characteristic Arnoux-Rauzy word and let $\Delta$ and $(b_i)_{i \geq 0}$ be defined as above. If $\Delta_i$ does not occur in $b_i$, then $b_{i+1}=b_i\Delta_i b_i$. Otherwise let $j<i$ be the largest integer such that $\Delta_j=\Delta_i$. Then $b_{i+1}=b_i b_j^{-1} b_i$.
\end{lemma}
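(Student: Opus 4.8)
The plan is to reduce everything to identifying the longest palindromic suffix of $b_i\Delta_i$ and then to apply the closure formula. Write $a=\Delta_i$. By the definition of the right palindromic closure, if $p$ is the longest palindromic suffix of $b_ia$ and $b_ia=wp$, then $b_{i+1}=(b_ia)^{(+)}=wp\tilde w$. So the whole statement follows once I pin down $p$ in each of the two cases, and I will treat both cases through a single computation of $p$.

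The key step I would carry out is a correspondence between the palindromic suffixes of $b_ia$ and the palindromic prefixes of $b_i$, exploiting that $b_i$ is itself a palindrome. Let $s$ be a palindromic suffix of $b_ia$ with $|s|\ge 2$. Since $s$ ends in $a$ and is a palindrome, it begins in $a$, so $s=aQa$ with $Q$ a palindrome. Now $s$ is a suffix of $b_ia$ iff $aQ$ is a suffix of $b_i$, and because $b_i=\tilde b_i$ this holds iff $\widetilde{aQ}=Qa$ is a prefix of $b_i$; in particular $Q$ is a palindromic prefix of $b_i$. Conversely, any palindromic prefix $Q$ of $b_i$ with $Qa$ still a prefix of $b_i$ gives the palindromic suffix $aQa$. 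Next I would invoke the correspondence recalled before the lemma: the palindromic prefixes of $b_i$ are exactly the bispecials $b_m$ with $m\le i$, and since $b_{m+1}=(b_m\Delta_m)^{(+)}$ begins with $b_m\Delta_m$, the letter following $b_m$ in $u$ is $\Delta_m$. Hence ``$b_ma$ is a prefix of $b_i$'' is equivalent to ``$m<i$ and $\Delta_m=a$.'' Since the lengths $|ab_ma|=|b_m|+2$ increase with $m$, this determines the longest palindromic suffix.

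This resolves both cases at once. First note that the letters occurring in $b_i=\psi(\Delta_0\cdots\Delta_{i-1})$ are exactly $\Delta_0,\dots,\Delta_{i-1}$, so $a$ occurs in $b_i$ iff some $m<i$ has $\Delta_m=a$. If $a$ does not occur in $b_i$, then there is no palindromic suffix of length $\ge 2$, so $p=a$ and $b_{i+1}=b_i a\,\tilde b_i=b_ia b_i$. Otherwise the admissible indices are nonempty, their maximum is precisely $j$, and the longest palindromic suffix is $p=ab_ja$. To finish Case~2 I would write $b_i=wab_j$ (legitimate, as $ab_j$ is then a suffix of $b_i$ by the correspondence), giving $b_{i+1}=w(ab_ja)\tilde w$; a one-line computation using $b_i=\tilde b_i$, which yields $wab_j=\widetilde{wab_j}=b_ja\tilde w$, rewrites this as $(wa)(wab_j)=b_ib_j^{-1}b_i$, as claimed.

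The main obstacle is the correspondence in the second paragraph: the genuinely combinatorial content is recognizing that, because $b_i$ is a palindrome, palindromic suffixes of $b_ia$ are governed by palindromic prefixes of $b_i$, and then identifying the letter that follows each such prefix as the directive letter $\Delta_m$. Once this is in place, I expect the remaining verifications—that $ab_j$ is a suffix of $b_i$, the length monotonicity, and the final $w$-cancellation—to be routine.
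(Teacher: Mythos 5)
Your proof is correct. Note that the paper contains no proof of this lemma at all: it is presented as ``a direct consequence'' of the identity $b_{i+1}=(b_i\Delta_i)^{(+)}$ together with the preceding definitions, with details implicitly deferred to the cited literature (de Luca for the binary case, Droubay--Justin--Pirillo for episturmian words). Your argument supplies precisely those missing details, and it does so along the standard route that the paper gestures at: classify the palindromic suffixes of $b_i\Delta_i$ of length at least $2$ via palindromic prefixes $Q$ of the palindrome $b_i$ with $Q\Delta_i$ still a prefix, identify those prefixes with the bispecials $b_m$, $m\le i$ (using the paper's correspondence between bispecials and palindromic prefixes), and read off the letter following $b_m$ in $u$ as $\Delta_m$. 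Each step checks out: the equivalence ``$b_m\Delta_i$ is a prefix of $b_i$ iff $m<i$ and $\Delta_m=\Delta_i$''; the observation that the letters occurring in $b_i=\psi(\Delta_0\cdots\Delta_{i-1})$ are exactly $\Delta_0,\dots,\Delta_{i-1}$ (an easy induction on the palindromic closure), which makes the two cases of the lemma match the two cases of your suffix analysis; the length monotonicity singling out $\Delta_i b_j\Delta_i$ as the longest palindromic suffix; and the final cancellation $b_{i+1}=w\Delta_i b_j\Delta_i\tilde w=(w\Delta_i)b_i=b_ib_j^{-1}b_i$ using $b_i=\widetilde{w\Delta_i b_j}=b_j\Delta_i\tilde w$. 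So the proposal is a complete and correct proof of a statement the paper asserts without proof.
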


\subsection{Parikh Vectors and Arnoux-Rauzy Factors}

Where no confusion arises, given an Arnoux-Rauzy word $u$, we will denote by
\[ \varepsilon=b_0, b_1, \ldots, b_n, \ldots \]
the sequence of bispecial factors of $u$ ordered by length and we will denote for any $i \in \Nn$, $\vv b_i$ the Parikh vector of $b_i$.

\begin{remark}
By the pigeonhole principle, it is clear that for every $m \in \Nn$ there exists an integer $N \in \Nn$ such that, for every $i\geq N$, the set $\{ j>i \mid \vv b_j \equiv_m \vv b_i \}$ is infinite. Where no confusion arises and with a slight abuse of notation, fixed $m$, we will always denote by $N$ the smallest of such integers.
\end{remark}

\begin{lemma}\label{zerocombination}
Let $u$ be a characteristic Arnoux-Rauzy word and let $m \in \Nn$. Let
\[ \alpha_1 \vv b_{j_1} + \cdots + \alpha_k \vv b_{j_k} \equiv_m   \vv v \in \zz_m^d\]
be a linear combination of Parikh vectors such that $\sum_{i=1}^k \alpha_i = 0$, with $j_i \geq N$ and $\alpha_i \in \zz$ for all  $i \in \{1, \ldots k\}$.
Then, for any $\ell \in \Nn$, there exists a prefix $v$ of $u$ such that the Parikh vector of $v$ is congruent to $\vv v$ modulo $m$ and $vb_\ell$ is also a prefix of $u$.
\end{lemma}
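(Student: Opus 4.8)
The plan is to reduce the statement to a single cleaner claim: there exists an occurrence $P$ of $b_\ell$ in $u$ such that $\vv{\Pre_P u}\equiv_m \vv v$. Indeed, if $P$ is any occurrence of $b_\ell$, then $v:=\Pre_P u$ is a prefix of $u$ with $vb_\ell=\Pre_{P+|b_\ell|}u\in\pref u$, so producing the right occurrence automatically produces the required prefix. The first step is then a purely algebraic reduction of the target. Since $\sum_{i=1}^k\alpha_i=0$, subtracting $(\sum_i\alpha_i)\vv b_{j_1}=0$ rewrites $\vv v$ as a $\zz$-combination of differences $\vv b_a-\vv b_{a'}$ with $a,a'\in\{j_1,\dots,j_k\}$. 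Using that every index is $\geq N$ together with the recurrence of bispecial Parikh vectors modulo $m$ guaranteed by the choice of $N$ (each value $\vv b_i\bmod m$, $i\geq N$, is attained for infinitely many larger indices), I replace indices by larger congruent representatives so as to write
\[\vv v\equiv_m\sum_{p=1}^t\bigl(\vv b_{r_p}-\vv b_{s_p}\bigr),\qquad \ell\le s_p<r_p,\ s_p,r_p\geq N .\]
This turns the claim into realizing a sum of such \emph{ordered} differences as the Parikh vector of a prefix ending before an occurrence of $b_\ell$.

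The second step is the local realization of one ordered difference. Recall from Lemma~\ref{lemmabispecials} (and Theorem~\ref{charAR}) that the $b_i$ are the palindromic prefixes of $u$, hence nested, and that a shorter palindromic prefix $b_s$ is simultaneously a \emph{prefix and a suffix} of any longer $b_r$. Consequently, inside the prefix $b_r$ of $u$ the suffix occurrence of $b_s$ starts at position $|b_r|-|b_s|$; since $\ell\le s$, the word $b_\ell$ is a prefix of $b_s$, so $P=|b_r|-|b_s|$ is an occurrence of $b_\ell$ with $\Pre_P u=b_r b_s^{-1}$, and therefore $\vv{\Pre_P u}=\vv b_r-\vv b_s$ exactly. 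This computation is translation invariant: whenever $b_r$ occurs in $u$ at a position $X$, the two $b_\ell$-occurrences coming from $b_s$ as prefix and as suffix of that copy are separated by a factor of Parikh vector $\vv b_r-\vv b_s$. This handles the base case $t=1$ (taking the prefix occurrence $X=0$), and more importantly it produces, relative to any occurrence of $b_r$, a controlled $b_\ell$-to-$b_\ell$ increment equal to $\vv b_r-\vv b_s$.

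The third step is to chain the $t$ differences by induction on $t$. Suppose I already have an occurrence $P_{t-1}$ of $b_\ell$ with $\vv{\Pre_{P_{t-1}}u}\equiv_m\sum_{p<t}(\vv b_{r_p}-\vv b_{s_p})$. Since $b_{r_t}$ begins with $b_\ell$, I look for a later occurrence of $b_{r_t}$ starting at some position $P^{\ast}\geq P_{t-1}$; by the uniform recurrence of Arnoux--Rauzy words such occurrences exist arbitrarily far out. The key requirement is that the connecting factor $u_{P_{t-1}}\cdots u_{P^{\ast}-1}$ have Parikh vector $\equiv_m 0$, for then the internal suffix $b_{s_t}$ of the $b_{r_t}$-copy at $P^{\ast}$ yields, by the local computation, an occurrence $P_t:=P^{\ast}+|b_{r_t}|-|b_{s_t}|$ of $b_\ell$ with $\vv{\Pre_{P_t}u}\equiv_m\sum_{p\le t}(\vv b_{r_p}-\vv b_{s_p})$. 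Such \emph{neutral bridges} are supplied by the definition of $N$: between two $b_\ell$-occurrences of the form $|b_k|-|b_\ell|$ and $|b_{k'}|-|b_\ell|$ with $\vv b_k\equiv_m\vv b_{k'}$ (abundant for $k,k'\geq N$) the traversed factor has Parikh vector $\equiv_m 0$, and these can be prepended to route from $P_{t-1}$ to a start of a $b_{r_t}$-occurrence without changing the residue. Taking $v=\Pre_{P_t}u$ at the end of the induction completes the proof.

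The main obstacle is precisely this gluing in the third step: controlling, modulo $m$, the Parikh vector of the otherwise uncontrollable segment of the fixed word $u$ that separates consecutive difference-realizations, and guaranteeing that a neutral bridge can always be inserted so as to land on the start of a chosen occurrence of $b_{r_t}$ in the correct order. Everything else (the algebraic reduction and the local difference computation) is routine once the nesting of palindromic prefixes is in place; the recurrence of bispecial Parikh vectors encoded in $N$ is the one device that tames the bridges, and making that argument fully rigorous — in particular verifying that congruent bispecial checkpoints can be reached from an \emph{arbitrary} occurrence of $b_\ell$, not only from prefix positions — is where the real work lies.
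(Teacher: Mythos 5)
Your Steps 1 and 2 are sound and coincide with the paper's ingredients: since $\sum_i\alpha_i=0$ the target is a sum of differences of Parikh vectors of bispecials, indices can be pushed up to congruent larger ones via the pigeonhole remark defining $N$, and the nesting of palindromic prefixes gives the local computation that $b_rb_s^{-1}b_\ell$ is a prefix of $b_r$ with Parikh vector $\vv b_r-\vv b_s$ before the $b_\ell$-occurrence. The genuine gap is exactly the one you flag in Step 3, and it is not fillable with the tools you invoke. Your ``neutral bridges'' require either (a) finding, forward of an \emph{arbitrary} occurrence of $b_\ell$, an occurrence of $b_{r_t}$ whose connecting factor has Parikh vector $\equiv_m 0$ --- but that is essentially the WELLDOC property for the factor $b_{r_t}$, i.e.\ the very statement this lemma is designed to prove, so the argument is circular; or (b) reaching a checkpoint of the form $|b_k|-|b_\ell|$, which forces $\vv b_k\equiv_m V_{t-1}+\vv b_\ell$ where $V_{t-1}$ is the accumulated residue. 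The remark defining $N$ only guarantees that each residue \emph{actually attained} by some $\vv b_i \bmod m$, $i\geq N$, is attained infinitely often; it does not make the set $\{\vv b_i \bmod m\}$ all of $\zz_m^d$ (for the Fibonacci word it is a thin set of residue pairs), so a bispecial with the prescribed residue $V_{t-1}+\vv b_\ell$ will generally not exist. After the very first difference is realized, your accumulated residue $\vv b_{r_1}-\vv b_{s_1}$ is already typically outside the reachable checkpoint set, and the chaining stalls.

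The paper's proof avoids bridges entirely by \emph{nesting} instead of chaining, and this is the one idea your sketch is missing. It inducts on the sum $\beta$ of the positive coefficients: at each step it picks $i'>j'>\ell$ with $\vv b_{i'}\equiv_m\vv b_{j_1}$ and $\vv b_{j'}\equiv_m\vv b_{j_k}$, applies the induction hypothesis to the reduced combination $(\alpha_1-1)\vv b_{j_1}+\cdots+(\alpha_k+1)\vv b_{j_k}$ \emph{with the target $\ell$ replaced by $i'$}, obtaining a prefix $w$ with $wb_{i'}$ a prefix of $u$, and then sets $v=wb_{i'}b_{j'}^{-1}$; since $b_{i'}b_{j'}^{-1}b_\ell$ is a prefix of $b_{i'}$, the word $vb_\ell$ is automatically a prefix of $u$. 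In other words, each new difference is realized \emph{inside} the large bispecial furnished by the previous stage, so all connecting factors are empty; unrolled, the constructed prefix is $b_{i_1}b_{j_1}^{-1}b_{i_2}b_{j_2}^{-1}\cdots$ with nested indices $i_1>j_1>i_2>j_2>\cdots>\ell$ chosen congruent to the prescribed ones. Your Step 2 already contains the needed local computation; replacing your left-to-right chaining by this inside-out nesting (equivalently, choosing the congruent representatives in interleaved decreasing order rather than independently) turns your sketch into a correct proof.
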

\begin{proof}
Without loss of generality, we can assume $\alpha_1 \geq \alpha_2 \geq \cdots \geq \alpha_k$, hence there exists $k'$ such that  \[ \alpha_1 \geq \alpha_{k'} \geq 0 \geq \alpha_{k'+1} \geq \alpha_k. \] We will prove the result by induction on $\beta = \sum_{j=1}^{k'} \alpha_j$. If $\beta = 0$, trivially, we can take $v = \varepsilon$ and the statement is clearly verified. Let us assume the statement true for all $0\leq \beta < n$ and let us prove it for $\beta = n$. By the remark preceding this lemma, for every $\ell$ we can choose $i' > j' > \ell$ such that $\vv b_{j_1} \equiv_m \vv b_{i'}$ and $\vv b_{j_k} \equiv_m \vv b_{j'}$. Since every bispecial factor is a prefix and suffix of all the bigger ones, in particular we have that
$b_{j'}$ is a suffix of $b_{i'}$, and $b_{\ell}$ is a prefix of $b_{j'}$;
this implies that $b_{i'}b_{j'}^{-1}b_\ell$ is actually a prefix of $b_{i'}$. By assumption, the Parikh vector of $b_{i'}b_{j'}^{-1}$ is clearly $\vv b_{i'}-\vv b_{j'}\equiv_m \vv b_{j_1} - \vv b_{j_k}$. Since $\alpha_1 \geq 1$ implies $\alpha_k \leq -1$, we have, by induction hypothesis, that there exists a prefix $w$ of $u$ such that the Parikh vector of $w$ is congruent modulo $m$ to
\[ (\alpha_1 -1) \vv b_{j_1} + \cdots + (\alpha_k +1) \vv b_{j_k} \]
and $wb_{i'}$ is a prefix of $u$. Hence $wb_{i'}b_{j'}^{-1}b_\ell$ is also a prefix of $u$ and, by simple computation, the Parikh vector of $v=wb_{i'}b_{j'}^{-1}$ is congruent modulo $m$ to $\vv v=\alpha_1 \vv b_{j_1} + \cdots + \alpha_k \vv b_{j_k}$.
\end{proof}

\begin{definition}
Let $n \in \zz$. We will say that an integer linear combination of integer vectors is a \emph{$n$-combination} if the sum of all the coefficients equals $n$.
\end{definition}

\begin{lemma}\label{keylemma}
Let $u$ be a characteristic Arnoux-Rauzy word and let $n \in \Nn$. Every $n$-combination of Parikh vectors of bispecial factors can be expressed as an $n$-combination of Parikh vectors of arbitrarily large bispecials. In particular, for every $K, L \in \Nn$, it is possible to find a finite number of integers $\alpha_1, \ldots, \alpha_k$ such that $\vv b_K = \alpha_1 \vv b_{j_1} + \cdots + \alpha_k \vv b_{j_k}$ with $j_i > L$ for every $i$ and $\alpha_1+\cdots+\alpha_k=1$.
\end{lemma}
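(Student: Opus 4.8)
The plan is to extract from Lemma~\ref{lemmabispecials} a single \emph{index-raising} identity among Parikh vectors and then bootstrap it. The crucial observation is this. Fix $K$ and set $a=\Delta_K$. Since every letter occurs infinitely often in the directive sequence $\Delta$, there is a smallest index $i>K$ with $\Delta_i=a$; by minimality, $K$ is then the largest index $<i$ with $\Delta_K=a$. Moreover $a$ occurs in $b_{K+1}$, because $b_{K+1}=(b_K\Delta_K)^{(+)}$ has $b_K\Delta_K$ as a prefix; since each bispecial is a prefix of all larger ones, $a$ occurs in every $b_i$ with $i>K$. Hence the second case of Lemma~\ref{lemmabispecials} applies at step $i$, giving $b_{i+1}=b_ib_K^{-1}b_i$. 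As $b_K$ is a suffix of $b_i$, passing to Parikh vectors yields
\[ \vv b_{i+1}=2\vv b_i-\vv b_K,\qquad\text{equivalently}\qquad \vv b_K=2\vv b_i-\vv b_{i+1}. \]
This is a $1$-combination (the coefficients $2$ and $-1$ sum to $1$) that expresses $\vv b_K$ through two bispecials whose indices $i,i+1$ both exceed $K$.

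Next I would prove the ``in particular'' clause: for all $K,L\in\Nn$, the vector $\vv b_K$ is a $1$-combination of bispecials with indices $>L$. When $K>L$ this is trivial, taking $\vv b_K$ itself. For $K\le L$ I would induct on $L-K$: apply the displayed relation to write $\vv b_K=2\vv b_i-\vv b_{i+1}$ with $i,i+1>K$; each of $i,i+1$ either already exceeds $L$, or satisfies $L-i<L-K$ and $L-(i+1)<L-K$, so by the induction hypothesis $\vv b_i$ and $\vv b_{i+1}$ are themselves $1$-combinations of bispecials with indices $>L$. Substituting these expansions keeps the combination finite, leaves all indices $>L$, and preserves the coefficient sum since $2\cdot 1-1\cdot 1=1$.

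Finally the general statement follows by linearity. Given any $n$-combination $\vv v\equiv_m\sum_t\alpha_t\vv b_{j_t}$ with $\sum_t\alpha_t=n$, I would replace each $\vv b_{j_t}$ by one of the $1$-combinations $\sum_s\gamma_{t,s}\vv b_{m_{t,s}}$ produced above, with all $m_{t,s}>L$ and $\sum_s\gamma_{t,s}=1$. The resulting combination uses only bispecials with indices $>L$, and its coefficient sum is $\sum_t\alpha_t\sum_s\gamma_{t,s}=\sum_t\alpha_t=n$, so it is again an $n$-combination, as required.

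The only genuine idea here is reading the second case of Lemma~\ref{lemmabispecials} \emph{backwards}, at the next occurrence of $\Delta_K$, to raise indices rather than lower them; everything after that is bookkeeping. The step I would watch most carefully is the finiteness and termination of the iteration, which is exactly what the induction on $L-K$ secures: each application of the relation replaces a term by strictly larger indices, so the quantity $L-(\cdot)$ strictly decreases for the indices still needing processing. I would also verify explicitly the two facts used silently, namely that $a=\Delta_K$ occurs in $b_i$ for all $i>K$ (so that we land in the second, not the first, case of the lemma) and that each smaller bispecial is both a prefix and a suffix of the larger ones, which is what legitimizes the factorization $b_{i+1}=b_ib_K^{-1}b_i$ and hence the Parikh-vector identity.
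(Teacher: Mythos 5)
Your proof is correct and follows essentially the same route as the paper's: both rest on reading Lemma~\ref{lemmabispecials} at the next occurrence of $\Delta_K$ in the directive sequence to obtain the index-raising identity $\vv b_K = 2\vv b_i - \vv b_{i+1}$, i.e., a $1$-combination of strictly larger bispecials, and then iterating. The only difference is one of detail: where the paper says ``simply iterating the process,'' you make the iteration precise via induction on $L-K$ and an explicit linearity argument, and you spell out the silent facts (that $\Delta_K$ occurs in $b_i$ for $i>K$, and that $b_K$ is a suffix of $b_i$) that the paper leaves implicit.
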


\begin{proof}
A direct consequence of Lemma \ref{lemmabispecials} is that for every $i$ such that $\Delta_i$ appears in $b_i$, we have $\vv b_{i+1}=2\vv b_i-\vv b_j$, where $j<i$ is the largest such that $\Delta_j=\Delta_i$. This in turn (since every letter in $\Delta$ appears infinitely many times from the definition of Arnoux-Rauzy word) implies that \emph{for every} non-negative integer $j$, there exists a positive $k$ such that $\vv b_j = 2\vv b_{j+k}-\vv b_{j+k+1}$, that is, we can substitute each Parikh vector of a bispecial with a $1$-combination of Parikh vectors of strictly larger bispecials. Simply iterating the process, we obtain the statement.
\end{proof}

In the following we will assume the set ${\mathcal A}$ to be a finite
alphabet of cardinality $d$. For every set $X \subseteq {\mathcal A}^*$ of
finite words, we will denote by $\pv X \subseteq \zz^d$ the set of
Parikh vectors of elements of $X$ and for every $m \in \Nn$ we
will denote by $\pvn X \subseteq \zz_m^d$  the set of elements of
$\pv X$ reduced modulo $m$.

For an infinite word $u$ over ${\mathcal A}$, and a factor $v$ of $ u$, let $S_v( u)$ denote the set of all prefixes of $ u$ followed by an occurrence of $v$. In other words,
\[S_v( u)=\{p \in \pref{ u} \mid p v \in \pref{ u} \}.\]

\begin{definition}
For any set of finite words $X \subseteq {\mathcal A}^*$, we will say that $ u$ \emph{has the property} $\Pp_X$ (or, for short, that $ u$ has $\Pp_X$) if, for every $m \in \Nn$ and for every $v \in X$ we have that
\[ \pvn{S_v( u)}= \zz_m^d.\]
That is to say, for every vector $\vv w \in \zz_m^d$ there exists a word $w \in S_v( u)$ such that the Parikh vector of $w$ is congruent to $\vv w$ modulo $m$.
\end{definition} 
With this notation, an infinite word $u$ has the WELLDOC property if and only if it has the
property $\Pp_{\Ff(u)}$.

\begin{proposition}\label{prefresult}
Let $u$ be a characteristic Arnoux-Rauzy word over the $d$-letter alphabet ${\mathcal A}$. Then $u$ has the property $\Pp_{\pref u}$.
\end{proposition}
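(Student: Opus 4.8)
The plan is to deduce the statement from Lemmas~\ref{zerocombination} and~\ref{keylemma}, after one purely lattice-theoretic computation. Fix the modulus $m$ and let $N$ be the integer from the remark preceding Lemma~\ref{zerocombination}. I would first reduce to realizing a single target: fix a prefix $v$ of $u$ and a vector $\vv w\in\zz_m^d$, and seek $p\in S_v(u)$ with Parikh vector congruent to $\vv w$ modulo $m$. Since the bispecials $b_\ell$ are prefixes of $u$ of unbounded length, $v$ is a prefix of $b_\ell$ once $\ell$ is large enough; and whenever $p\in\pref{u}$ satisfies $pb_\ell\in\pref{u}$, the word $pv$ is a prefix of $pb_\ell$, so $p\in S_v(u)$. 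Thus it suffices to produce a prefix $p$ of $u$ with Parikh vector $\equiv_m\vv w$ such that $pb_\ell\in\pref{u}$ for some such $\ell$ --- which is precisely the output of Lemma~\ref{zerocombination}, once I can write a representative of $\vv w$ as a $0$-combination $\sum_i\alpha_i\vv b_{j_i}$ (that is, $\sum_i\alpha_i=0$) with all $j_i\geq N$.

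Next I would discharge the two side conditions and isolate the essential point. By Lemma~\ref{keylemma} (with $n=0$), any $0$-combination of Parikh vectors of bispecials can be rewritten as a $0$-combination of Parikh vectors of bispecials of arbitrarily large index, so the constraint $j_i\geq N$ costs nothing; it therefore remains only to show that every vector of $\zz^d$ is some $0$-combination of the $\vv b_i$. Equivalently, letting $G\leq\zz^d$ be the subgroup generated by all differences $\vv b_i-\vv b_j$ --- equivalently, by the consecutive differences $\vv b_{i+1}-\vv b_i$, since $\vv b_i-\vv b_j=\sum_{k=j}^{i-1}(\vv b_{k+1}-\vv b_k)$ --- I must prove $G=\zz^d$; then $G$ surjects onto $\zz_m^d$ for every $m$ and the first paragraph concludes.

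The core of the argument, and the step I expect to be the main obstacle, is thus establishing $G=\zz^d$. Here I would evaluate the consecutive differences via Lemma~\ref{lemmabispecials}: if $\Delta_i$ does not occur in $b_i$ then $\vv b_{i+1}-\vv b_i=\vv b_i+\mathbf e_{\Delta_i}$ (where $\mathbf e_a$ is the canonical basis vector of the letter $a$), whereas if $\Delta_i$ does occur then $\vv b_{i+1}-\vv b_i=\vv b_i-\vv b_j$ already lies in $G$. Using the recurrence $b_{i+1}=(b_i\Delta_i)^{(+)}$, one checks that $b_i$ contains exactly the letters occurring among $\Delta_0,\dots,\Delta_{i-1}$; so if the distinct letters make their first appearance in $\Delta$ at positions $p_1<\cdots<p_d$, then $\vv b_{p_k}$ lies in the span of $\mathbf e_{\Delta_{p_1}},\dots,\mathbf e_{\Delta_{p_{k-1}}}$. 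Starting from $\vv b_1-\vv b_0=\mathbf e_{\Delta_0}$ (as $b_0=\varepsilon$) and inducting on $k$, the identity $\vv b_{p_k+1}-\vv b_{p_k}=\vv b_{p_k}+\mathbf e_{\Delta_{p_k}}\in G$ combined with $\vv b_{p_k}\in G$ forces $\mathbf e_{\Delta_{p_k}}\in G$. Hence every canonical basis vector lies in $G$, so $G=\zz^d$, which finishes the proof.
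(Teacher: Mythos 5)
Your proposal is correct, and it follows the same overall skeleton as the paper's proof: both reduce the statement to writing (a representative of) the target vector as a $0$-combination of Parikh vectors of bispecial factors, then invoke Lemma~\ref{keylemma} to push the indices above $N$ and Lemma~\ref{zerocombination} (with $\ell$ chosen so that $v$ is a prefix of $b_\ell$) to produce the desired prefix. Where you diverge is in the core lattice computation. The paper takes the $d$ vectors $\vv b_{i_j+1}$ at the first-occurrence positions of the letters in $\Delta$, observes via Lemma~\ref{lemmabispecials} that they form a triangular system with $1$'s on the diagonal and hence generate $\zz^d$, writes the target as an $n$-combination of them for some $n\in\zz$, and then fixes the coefficient sum with the small but essential trick $\vv v=\vv v-n\vv b_0$, exploiting that the empty word is a bispecial with Parikh vector $\vv 0$. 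You instead work from the start inside the subgroup $G$ of $0$-combinations, identify it with the subgroup generated by consecutive differences $\vv b_{i+1}-\vv b_i$, and extract the canonical basis vectors one at a time by induction on the first-occurrence positions, using $\vv b_{p_k+1}-\vv b_{p_k}=\vv b_{p_k}+\mathbf e_{\Delta_{p_k}}$ together with the fact that $\vv b_{p_k}$ lies in the span of the previously extracted basis vectors. The two mechanisms rest on the same triangular structure, but yours never needs the $\vv b_0=\vv 0$ adjustment, since the coefficient-sum-zero constraint is built into $G$ from the outset; the paper's version is slightly shorter, while yours makes it more transparent why that constraint costs nothing. Both arguments are complete and correct.
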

\begin{proof}
Let us fix an arbitrary $m \in \Nn$. We want to show that, for every $v \in \pref u$, $\pvn{S_v(u)}=\zz_m^d$. Let then $\vv v \in \zz^d$ and $\ell$ be the smallest number such that $v$ is a prefix of $b_\ell$. Let $i_1<i_2< \cdots < i_d$ be such that $\Delta_{i_j}$ does not appear in $b_{i_j}$, where $\Delta$ is the directive word of $u$. Without loss of generality, we can rearrange the letters so that each $\Delta_{i_j}$ is lexicographically smaller than $\Delta_{i_{j+1}}$. With this assumption if, for every $j$, we set $\vv v_j=\vv b_{i_j +1}$, i.e., equal to the Parikh vector of $b_{i_j +1}$, which, by the first part of Lemma \ref{lemmabispecials}, equals $b_{i_j}\Delta_{i_j}b_{i_j}$, we can find $j-1$ positive integers $\mu_1, \ldots, \mu_{j-1}$ such that $\vv v_j = (\mu_1,\mu_2,\ldots,\mu_{j-1},1,0,\ldots,0)$. It is easy to show, then, that the set $V=\{\vv v_1,\ldots, \vv v_d\}$ generates $\zz^d$, hence there exists an integer $n$ such that $\vv v$ can be expressed as an $n$-combination of elements of $V$ (which are Parikh vectors of bispecial factors of $u$). Trivially, then, $\vv v = \vv v- n \vv 0 = \vv v -n \vv b_0$; thus, it is possible to express $\vv v$ as a $0$-combination of Parikh vectors of (by the previous Lemma \ref{keylemma}) arbitrarily large bispecial factors of $u$. By Lemma \ref{zerocombination}, then there exists a prefix $p$ of $u$ whose Parikh vector $\vv p$ satisfies $\vv p \equiv_m \vv v$ and $pb_\ell$ is a prefix of $u$. Since we picked $\ell$ such that $v$ is a prefix of $b_\ell$, we have that $p \in S_v(u)$. From the arbitrariness of $v$, $\vv v$ and $m$, we obtain the statement.
\end{proof}

As a corollary of Proposition~\ref{prefresult}, we obtain the main result of this section.
\begin{theorem}\label{AR}
Let $u$ be an Arnoux-Rauzy word over the $d$-letter alphabet ${\mathcal A}$. Then $u$ has the property $\Pp_{\Ff(u)}$, or equivalently, $u$ has the WELLDOC property.
\end{theorem}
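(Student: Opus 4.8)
The plan is to derive Theorem~\ref{AR} as a direct consequence of Proposition~\ref{prefresult}, bridging the gap between the property $\Pp_{\pref u}$ (good distribution of prefixes preceding occurrences of \emph{prefixes} of $u$) and the property $\Pp_{\Ff(u)}$ (the same for \emph{all} factors), and then passing from the characteristic word to an arbitrary Arnoux-Rauzy word. First I would reduce to the characteristic case: since any Arnoux-Rauzy word $u$ has the same set of factors as its characteristic word $c$, and the WELLDOC property is phrased entirely in terms of occurrences of factors together with Parikh vectors of preceding prefixes, it suffices to prove $\Pp_{\Ff(c)}$ for the characteristic word $c$. The key observation for this reduction is that in an Arnoux-Rauzy word every factor is recurrent, so differences between the two words amount only to a finite shift; the asymptotic set of Parikh vectors modulo $m$ of prefixes preceding a given factor is unaffected.

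Next I would carry out the main step: upgrading from prefixes to arbitrary factors. Fix a factor $v$ of the characteristic word $c$ and a modulus $m$; the goal is $\pvn{S_v(c)}=\zz_m^d$. The idea is that every factor $v$ occurs inside some bispecial factor (equivalently, some palindromic prefix) $b_\ell$, because bispecial factors grow in length and every factor of a recurrent word occurs in arbitrarily long prefixes, hence in arbitrarily long palindromic prefixes. So write $b_\ell = x\,v\,y$ for suitable words $x,y$. Then an occurrence of $b_\ell$ at a prefix $p$ (i.e.\ $p b_\ell \in \pref c$, so $p \in S_{b_\ell}(c)\subseteq S_{\pref c}(c)$) produces an occurrence of $v$ at the prefix $px$. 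Thus the Parikh vector of the prefix preceding this occurrence of $v$ is $\vv p + \vv x$, where $\vv x$ is the fixed Parikh vector of the prefix $x$ of $b_\ell$ lying before $v$. By Proposition~\ref{prefresult}, as $p$ ranges over $S_{b_\ell}(c)$ the vectors $\vv p \bmod m$ already cover all of $\zz_m^d$; adding the constant vector $\vv x \bmod m$ merely permutes $\zz_m^d$, so the preceding-prefix Parikh vectors for $v$ still cover $\zz_m^d$. This yields $\pvn{S_v(c)}=\zz_m^d$ for every factor $v$ and every $m$, which is exactly $\Pp_{\Ff(c)}$.

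The step I expect to require the most care is ensuring that $v$ really does occur as an \emph{internal} factor of some bispecial $b_\ell$ (so that $x$ and $y$ are both available and $\vv x$ is a well-defined fixed vector), rather than merely as a factor somewhere. This is where Theorem~\ref{charAR} and the structure of palindromic prefixes enter: since $\Ff(c)$ is closed under reversal and every factor occurs in some palindromic prefix, any factor $v$ occurs in a sufficiently long palindrome $b_\ell$, and by taking $\ell$ large one can guarantee $v$ sits strictly inside $b_\ell$. A minor point to handle cleanly is that Proposition~\ref{prefresult} is stated for $S_v(u)$ with $v\in\pref u$; I would apply it with the prefix taken to be $b_\ell$ itself (a palindromic prefix, hence genuinely a prefix of $c$), which is the natural handle connecting the two properties. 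Combining the reduction to the characteristic word with this covering argument gives $\Pp_{\Ff(u)}$, i.e.\ the WELLDOC property, for every Arnoux-Rauzy word $u$, completing the proof.
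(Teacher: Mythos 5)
Your main covering step is essentially the paper's: you embed the factor $v$ in a bispecial (palindromic) prefix $b_\ell = xvy$ of the characteristic word $c$, apply Proposition~\ref{prefresult} to that prefix ($b_\ell$ for you, $xv$ in the paper -- either works, since both are prefixes of $c$), and observe that translating by the fixed vector $\vv{x}$ is a bijection of $\zz_m^d$, so the covering transfers from occurrences of the prefix to occurrences of $v$. That part is sound, and your worry about $v$ sitting ``strictly inside'' $b_\ell$ is a non-issue: the argument works even if $x$ or $y$ is empty.

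The genuine gap is your reduction from an arbitrary Arnoux-Rauzy word $u$ to its characteristic word $c$. You justify it by claiming that, since every factor is recurrent, ``differences between the two words amount only to a finite shift.'' That is false: $u$ and $c$ share the same factor set, but $u$ need not lie in the shift orbit of $c$ at all. Already for Sturmian words, $s_{\alpha,\rho}$ has the same factors as the characteristic word for every intercept $\rho$, yet the shift orbit of the characteristic word is countable while the intercepts are not. Since the WELLDOC property concerns Parikh vectors of prefixes of the particular word $u$ -- not just its language -- equality of factor sets does not by itself transfer $\Pp_{\Ff(c)}$ to $\Pp_{\Ff(u)}$, and your argument as stated fails at exactly this point. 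The missing idea, which is how the paper bridges the gap, is a finiteness trick: because $\zz_m^d$ is finite, the covering $\pvn{S_{xv}(c)}=\zz_m^d$ is already witnessed inside some finite prefix $p$ of $c$; since $p$ is then a factor of $u$, one can pick a prefix $w$ of $u$ with $wp\in\pref{u}$, and every occurrence of $xv$ inside $p$ yields an occurrence of $v$ in $u$ whose preceding prefix has Parikh vector shifted by the constant $\vv{w}+\vv{x}$, again a bijection modulo $m$. This single argument accomplishes both the prefix-to-factor upgrade and the passage from $c$ to $u$ at once; your write-up needs this insertion to be correct.
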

\begin{proof}
Let $m$ be a positive integer and let $c$ be the characteristic word of $u$. Let $v$ be a factor of $u$ and $x v y$ be the shortest bispecial containing $v$. By Proposition \ref{prefresult}, we have that $\pvn {S_{x v}(c)}= \zz_m^d$ and, since the set is finite, we can find a prefix $p$ of $c$ such that $\pvn {S_{x v}(p)}= \zz_m^d$. Let $w$ be a prefix of $u$ such that $wp$ is a prefix of $u$. If $\vv x$ and $\vv w$ are the Parikh vectors of, respectively, $x$ and $w$, it is easy to see that
\[ \vv w + \vv x + \pv{S_{xv}(p)}\subseteq  \vv w + \pv{S_v(p)} \subseteq \pv{S_v(u)} \]
Since we have chosen $p$ such that $\pvn {S_{x v}(p)}= \zz_m^d$, we clearly obtain that $\pvn{S_v(u)}=\zz_m^d$ and hence, by the arbitrariness of $v$ and $m$, the statement.
\end{proof}
%

\begin{remark} Now we introduce a~simple method of obtaining words satisfying the WELLDOC property. Take a word $u$ with the WELLDOC property over an alphabet $\{0,1,\dots,d-1\}$, $d > 2$, apply a morphism $\varphi: d-1\mapsto 0, i\mapsto i $ for $i=0,\dots, d-2$, i.e., $\varphi$ joins two letters into one. It is straightforward that $\varphi(u)$ has the WELLDOC property. So, taking Arnoux-Rauzy words and joining some letters, we obtain other words than Sturmian and Arnoux-Rauzy satisfying the WELLDOC property. \end{remark} 

\begin{remark}
Now we introduce another class of morphisms preserving the WELLDOC
property. Recall that the \emph{adjacency matrix} $\Phi$ of a
morphism $\varphi:\Aa\to\Aa$, with $\Aa=\{0,1,\ldots,d-1\}$, is
defined by $\Phi_{i,j}=|\varphi(j-1)|_{i-1}$ for $1\leq i,j\leq
d$. By definition, it follows that if $\vv v$ is the Parikh vector
of $v\in\Aa^{*}$, then $\Phi\vv v$ is the Parikh vector of
$\varphi(v)$.

Let us show that if $\det\Phi=\pm 1$ and $u$ has the WELLDOC
property, then so does $\varphi(u)$. Indeed, let $w$ be any factor of $\varphi(u)$, and
suppose $xwy=\varphi(v)$ for some $v\in\Ff(u)$ and
$x,y\in\Aa^{*}$. We then have
$S_{w}(\varphi(u))\supseteq \varphi(S_{v}(u))x$,
so that,
writing $\vv x$ for the Parikh vector of $x$, we have for any $m>0$
\[\pvn{S_{w}(\varphi(u))}\supseteq\Phi\cdot \pvn{S_{v}(u)}+\vv{x}\bmod m\,.\]
Since $u$ has the WELLDOC property, $\pvn{S_{v}(u)}=\zz_{m}^{d}$. As $\det\Phi=\pm 1$,
$\Phi$ is invertible (even modulo $m$), so that
$\Phi\cdot\zz_{m}^{d}+\vv x\bmod m=\zz_{m}^{d}$.
Hence $\pvn{S_{w}(\varphi(u))}=\zz_{m}^{d}$, showing that $\varphi(u)$ has the WELLDOC
property by the arbitrariness of $w$ and $m$.
\end{remark} 

\section{Statistical Tests of PRNGs}
In the previous part, we have explained that PRNGs based on
infinite words with well distributed occurrences have no lattice
structure. In this sequel we demonstrate this by empirical statistical
tests. We have chosen to use LCGs as underlying generators \emph{explicitly} for their known weaknesses. We will show how mixing based on aperiodic infinite words will cope with these weaknesses and whether statistical tests will show any significant improvements.

\subsection{Computer Generation of Morphic Words}
Any real computer is a~finite state machine and hence it can
generate only finite prefixes of infinite words. From practical
point of view it is important to find algorithms that are
efficient both in memory footprint and CPU time. In~\cite{Patera}
an efficient algorithm for generating the Fibonacci word was
introduced: The prefix of length $n$ is generated in $O(\log(n))$
space and $O(n)$ time. We generalize this method for any Sturmian
and Arnoux-Rauzy word being a~fixed point of a~morphism $\varphi$.
The main ingredient is that we consider $\varphi^n$ instead of $\varphi$; we precompute and store in the memory $\varphi^n(a)$ for any $a \in {\mathcal A}$.
The runtime to generate $10^{10}$ letters of the Fibonacci and the
Tribonacci word is summarized in Table~\ref{table:speed}.
There are the following observations we would like to point out:
\begin{enumerate}
\item There is no need to store the first $n$ letters in memory to generate the $(n+1)$-th letter. Letters are generated on the fly and only nodes of the traversal tree are kept in the memory. Memory consumption needed to generate the first $10^{10}$ letters is shown in Table~\ref{table:speed}. The algorithm also supports leap frogging, generation can be started at any position in the word. The consequence is that the algorithm can be easily parallelized to produce multiple streams~\cite{Ecuyer}.
\item Using the method from~\cite{Patera} together with our improvement for generation of Sturmian and Arnoux-Rauzy words, the speed of generation of their prefixes is much higher than the speed of generation of LCGs output values. For example, generation of $10^{10}$ $32$-bit values using a~LCG modulo $2^{64}$ takes $14.3$ seconds on our machine. Compare it to $0.5$ seconds for generation of $10^{10}$ letters of a~fixed point of a~morphism with the same hardware. Thus, using a~fixed point to combine LCGs causes only a~negligible runtime penalty.
\item The speed of generation can be further improved by using a~higher initial memory footprint and CPU that can effectively copy such larger chunks of memory (size of L1 data cache is a limiting factor). Thus the new method scales nicely and can benefit form the future CPUs with higher L1 caches. The only requirement is to precompute $\varphi^n(a)$, $a \in {\mathcal A}$, for larger $n$. Our program does this automatically based on the limit on the initial memory consumption provided by the user.
\end{enumerate}

\begin{table}[ht]
\centering
\setlength{\tabcolsep}{5pt}
\begin{tabular}{|c|c|c|}
\hline
Word & Fibonacci & Tribonacci \\
\hline
$\varphi$ morphism rule & 115s / 336 Bytes & 107s / 256 Bytes \\
\hline
$\varphi^n$ morphism rule & 0.41s / 32 Bytes & 0.36s / 32 Bytes  \\
\hline
\end{tabular}
\caption{The comparison of time in seconds and memory consumption to hold the traversal tree state needed to generate the first $10^{10}$ letters of the Fibonacci and the Tribonacci word using the original~\cite{Patera} ($1$st line) and the new algorithm ($2$nd line). The iteration $n$~in~the $\varphi^n$ rule was chosen so that the length of $\varphi^n(a)$ does not exceed $4096$ bytes for any $a \in {\mathcal A}$. The measurement was done on Intel Core i7-3520M CPU running at 2.90GHz.}
\label{table:speed}
\end{table}
\subsection{Testing PRNGs Based on Sturmian and Arnoux-Rauzy words}
We will present results for PRNGs based on:
\begin{itemize}
\item the Fibonacci word (as an example of a Sturmian word), i.e., the fixed point of the morphism $0 \mapsto 01, 1 \mapsto 0$,
\item the modified Fibonacci word -- Fibonacci2 -- with the letter $2$ inserted after each letter (see Remark~\ref{modifiedFib}),
\item the Tribonacci word (as the simplest example of a ternary Arnoux-Rauzy word), i.e., the fixed point of $0 \mapsto 01, 1 \mapsto 02, 2 \mapsto 0$.
\end{itemize}
We have implemented PRNGs for more morphic Sturmian words and
ternary Arnoux-Rauzy words. Since the results are similar, we
present in the sequel only the above three representatives. Our
program generating PRNGs based on morphic words is available
online, together with a~description \cite{APRNG-lib}.

Remark that we included the modified Fibonacci word that does not have the
WELLDOC property, but at the same time it guarantees no lattice
structure for the arising generator. The reason for including
it is that we would like to illustrate that such a~word
leads to worse results in testing than the Fibonacci word.


\subsubsection{Combining LCGs}
Instead of combining plain LCGs, we will execute some modifications before their combination. Those modifications turn out to be useful according to the known weaknesses of LCGs. 

We have chosen LCGs with the period $m$ in range from $2^{47}-115$ to $2^{64}$, but we use only their upper $32$ bits as the output since the statistical tests require $32$-bit sequences as the input. Their output is thus in all cases $M=\{0,1,\dots, 2^{32}-1\}$.

We use two batteries of random tests -- TestU01 BigCrush and
PractRand. They operate differently. The first one includes $160$
statistical tests, many of them tailored to the specific classes
of PRNGs. It is a~reputable test, however its drawback is that it
works with a fixed amount of data and discards the least
significant bit (for some tests even two bits) of the $32$-bit
numbers being tested. The second battery consists of three
different tests where one is adapted on short range correlations,
one reveals long range violations, and the last one is a~variation
on the classical Gap test. Details can be found
in~\cite{PractRand-engines,PractRand-discussion}. Moreover, the
PractRand battery applies automatically various filters on the
input data. For our purpose the lowbit filter is interesting -- it
is passing various number of the least significant bits to the
statistical tests. As we have already mentioned, the LCGs with
$m=2^{\ell}$ have a~much shorter period than the LCG itself.
Therefore the lowbit filter is useful to check whether this
weakness disappears when LCGs are combined according to an
infinite word. The PractRand tests are able to treat very long
input sequences, up to a few exabytes. To control the runtime we
have limited the length of input sequences to $16$TB.

The first column of Table~\ref{table:LCG} shows the list of tested
LCGs. The BigCrush column shows how many tests of the TestU01
BigCrush battery failed. The PractRand column gives the $\log_{2}$
of sample datasize in Bytes for which the results of the PractRand
tests started to be ``very suspicious'' ($p$-values smaller than
$10^{-5}$). One LCG did not show any failures in the PractRand
tests which is denoted as $>44$ -- the meaning is that the
PractRand test has passed successfully $16$TB of input data and
the test was stopped there. The last column provides time in
seconds to generate the first $10^{10}$ $32$-bit sequences of
output on Intel i7-3520M CPU running at 2.90GHz.

\begin{table}[ht]
\centering
\setlength{\tabcolsep}{5pt}
\renewcommand{\arraystretch}{1.3}
\resizebox{\textwidth}{!} {
\begin{tabular}{|l|l|c|c|c|}
\hline
Generator & Legend & BigCrush & PractRand & Time $10^{10}$\\
\hline
LCG($2^{47}-115,71971110957370,0)$ & L47-115 & 14 & 40 & 281\\
\hline
LCG($2^{63}-25,2307085864,0)$ & L63-25 & 2 & $>$44 & 277\\
\hline
LCG($2^{59},13^{13},0)$ & L59 & 19 & 27 & 14.1\\
\hline
LCG($2^{63},5^{19},1)$ & L63  & 19 & 33 & 14.4\\
\hline
LCG($2^{64},2862933555777941757,1)$ & L64\_28 & 18 & 35 & 14.0\\
\hline
LCG($2^{64},3202034522624059733,1)$ & L64\_32 & 14 & 34 & 14.1\\
\hline
LCG($2^{64},3935559000370003845,1)$ & L64\_39 & 13 & 33 & 14.0\\
\hline
\end{tabular}
}
\caption{List of the used LCGs with parameters LCG$(m,a,c)$. Results in the BigCrush (number of failed tests) and in the PractRand ($\log_{2}$ of sample size for which the test started to fail) battery of statistical tests. Time in seconds to generate the first $10^{10}$ $32$-bit words of output on Intel i7-3520M CPU running at 2.90GHz. }
\label{table:LCG}
\end{table}
From Table~\ref{table:LCG} it can be seen that the LCGs with $m\in \{2^{47}-115, 2^{63}-25\}$ have the best statistical properties from the chosen LCGs. At the same time, these LCGs are $20$ times slower than the other LCGs used. This is because we have used $128$-bit integer arithmetic to compute their internal state and because explicit modulo operation cannot be avoided.  As the CPU used does not have the $128$-bit integer arithmetic, it has to be implemented in software (in this case via GCC's {\tt \_\_int128} type) which is much slower than the $64$-bit arithmetic wired on CPU.


\subsubsection{Results in Statistical Tests}
We will present results for the PRNGs based on the Fibonacci, Fibonacci2 and Tribonacci word using the different combinations of LCGs from Table~\ref{table:LCG}. It includes also the situations where the instances of the same LCG are used. Each instance has its own state. The LCGs were seeded with the value $1$. The PRNGs were warmed up by generating $10^{9}$ values before statistical tests started. Since the relative frequency of the letters in the aperiodic words differ a lot (for example for the Fibonacci word the ratio of zeroes to ones is given by $\tau=\frac{1+\sqrt{5}}{2}$), 
the warming procedure will guarantee that the state of instances of LCGs will differ even when the same LCGs are used. Even more importantly, the distance between the LCGs is growing as the new output of PRNGs is generated.

Summary of results is in Table~\ref{table:APRNG_Table}. The BigCrush column is using the following notation: the first number indicates how many tests from the BigCrush battery have clearly failed and the optional second number in parenthesis denotes how many tests have suspiciously low $p$-value in the range from $10^{-6}$ to $10^{-4}$. The PractRand column gives the $\log_{2}$ of sample datasize in Bytes for which the results of the PractRand tests started to be ``very suspicious'' ($p$-values smaller than $10^{-5}$). The maximum sample data size used was $16$TB $\doteq 2^{44}$B. The Time column gives runtime in seconds to generate the first $10^{10}$ $32$-bit words of output on Intel i7-3520M CPU running at 2.90GHz. The source code of the testing programs is in~\cite{APRNG-lib}.

\LTcapwidth=\textwidth
\begin{longtable}{|l|l|l|l|l|c|c|c|}

\hline \multicolumn{1}{|c|}{Word} & \multicolumn{1}{c|}{Group} & \multicolumn{1}{c|}{0} & \multicolumn{1}{c|}{1} & \multicolumn{1}{c|}{2} & BigCrush & PractRand & Time $10^{10}$ \\ \hline
\endfirsthead

\multicolumn{8}{l}%
{{\tablename\ \thetable{} -- Continued from the previous page}} \\
 \hline \multicolumn{1}{|c|}{Word} & \multicolumn{1}{c|}{Group} & \multicolumn{1}{c|}{0} & \multicolumn{1}{c|}{1} & \multicolumn{1}{c|}{2} & BigCrush & PractRand & Time $10^{10}$ \\ \hline
\endhead

\hline \multicolumn{8}{|r|}{{Continued on the next page}} \\ \hline
\endfoot

\caption{Summary of results of statistical tests for PRNGs based on the Fibonacci, Fibonacci2 and Tribonacci word and different combinations of LCGs from Table~\ref{table:LCG}.}
\label{table:APRNG_Table}
\endlastfoot

\multicolumn{ 1}{|c|}{Fib} & \multicolumn{ 1}{c|}{A} & L64\_28 & L64\_28 &  & 0 & 41 & 30.2 \\ \cline{ 3- 8}
\multicolumn{ 1}{|l|}{} & \multicolumn{ 1}{l|}{} & L64\_32 & L64\_28 &  & 0(1) & 41 & 29.3 \\ \cline{ 3- 8}
\multicolumn{ 1}{|l|}{} & \multicolumn{ 1}{l|}{} & L64\_39 & L64\_28 &  & 0 (2) & 41 & 31 \\ \cline{ 3- 8}
\multicolumn{ 1}{|l|}{} & \multicolumn{ 1}{l|}{} & L64\_28 & L64\_32 &  & 0 & 41 & 30.2 \\ \cline{ 3- 8}
\multicolumn{ 1}{|l|}{} & \multicolumn{ 1}{l|}{} & L64\_32 & L64\_32 &  & 0 & 41 & 30.1 \\ \cline{ 3- 8}
\multicolumn{ 1}{|l|}{} & \multicolumn{ 1}{l|}{} & L64\_39 & L64\_32 &  & 0 & 41 & 30.1 \\ \cline{ 3- 8}
\multicolumn{ 1}{|l|}{} & \multicolumn{ 1}{l|}{} & L64\_28 & L64\_39 &  & 0 & 42 & 30.2 \\ \cline{ 3- 8}
\multicolumn{ 1}{|l|}{} & \multicolumn{ 1}{l|}{} & L64\_32 & L64\_39 &  & 0 & 40 & 30.5 \\ \cline{ 3- 8}
\multicolumn{ 1}{|l|}{} & \multicolumn{ 1}{l|}{} & L64\_39 & L64\_39 &  & 0 & 42 & 30.1 \\ \cline{ 2- 8}
\multicolumn{ 1}{|l|}{} & \multicolumn{ 1}{c|}{B} & L47-115 & L47-115 &  & 1(1) & $>$44 & 302 \\ \cline{ 3- 8}
\multicolumn{ 1}{|l|}{} & \multicolumn{ 1}{l|}{} & L63-25 & L63-25 &  & 0(1) & $>$44 & 299 \\ \cline{ 3- 8}
\multicolumn{ 1}{|l|}{} & \multicolumn{ 1}{l|}{} & L59 & L59 &  & 0(1) & 34 & 28.7 \\ \cline{ 3- 8}
\multicolumn{ 1}{|l|}{} & \multicolumn{ 1}{l|}{} & L63 & L63 &  & 0 & 40 & 29.8 \\ \cline{ 2- 8}
\multicolumn{ 1}{|l|}{} & \multicolumn{ 1}{c|}{C} & L63-25 & L59 &  & 0 & 38 & 198 \\ \cline{ 3- 8}
\multicolumn{ 1}{|l|}{} & \multicolumn{ 1}{l|}{} & L59 & L63-25 &  & 0(1) & 35 & 134 \\ \cline{ 3- 8}
\multicolumn{ 1}{|l|}{} & \multicolumn{ 1}{l|}{} & L63-25 & L64\_39 &  & 0 & $>$44 & 199 \\ \cline{ 3- 8}
\multicolumn{ 1}{|l|}{} & \multicolumn{ 1}{l|}{} & L64\_39 & L63-25 &  & 0 & 41 & 135 \\ \cline{ 3- 8}
\multicolumn{ 1}{|l|}{} & \multicolumn{ 1}{l|}{} & L59 & L64\_39 &  & 0 & 35 & 30.4 \\ \cline{ 3- 8}
\multicolumn{ 1}{|l|}{} & \multicolumn{ 1}{l|}{} & L64\_39 & L59 &  & 0 & 37 & 31.3 \\ \hline
\pagebreak[3]\multicolumn{ 1}{|c|}{Fib2} & \multicolumn{ 1}{c|}{A} & L64\_28 & L64\_28 & L64\_28 & 0 & 40 & 28.4 \\ \cline{ 3- 8}
\nopagebreak\multicolumn{ 1}{|l|}{} & \multicolumn{ 1}{l|}{} & L64\_39 & L64\_28 & L64\_28 & 0(2) & 40 & 27.9 \\ \cline{ 3- 8}
\nopagebreak\multicolumn{ 1}{|l|}{} & \multicolumn{ 1}{l|}{} & L64\_39 & L64\_32 & L64\_28 & 0 & 39 & 27.5 \\ \cline{ 3- 8}
\nopagebreak\multicolumn{ 1}{|l|}{} & \multicolumn{ 1}{l|}{} & L64\_28 & L64\_39 & L64\_28 & 0 & 40 & 27.3 \\ \cline{ 3- 8}
\nopagebreak\multicolumn{ 1}{|l|}{} & \multicolumn{ 1}{l|}{} & L64\_32 & L64\_39 & L64\_28 & 0 & 40 & 27.5 \\ \cline{ 3- 8}
\nopagebreak\multicolumn{ 1}{|l|}{} & \multicolumn{ 1}{l|}{} & L64\_39 & L64\_39 & L64\_28 & 0 & 40 & 27.4 \\ \cline{ 3- 8}
\nopagebreak\multicolumn{ 1}{|l|}{} & \multicolumn{ 1}{l|}{} & L64\_39 & L64\_28 & L64\_32 & 0 & 40 & 27.3 \\ \cline{ 3- 8}
\nopagebreak\multicolumn{ 1}{|l|}{} & \multicolumn{ 1}{l|}{} & L64\_28 & L64\_39 & L64\_32 & 0 & 40 & 27.9 \\ \cline{ 3- 8}
\nopagebreak\multicolumn{ 1}{|l|}{} & \multicolumn{ 1}{l|}{} & L64\_28 & L64\_28 & L64\_39 & 0(1) & 40 & 27.4 \\ \cline{ 3- 8}
\nopagebreak\multicolumn{ 1}{|l|}{} & \multicolumn{ 1}{l|}{} & L64\_32 & L64\_28 & L64\_39 & 0 & 39 & 27.7 \\ \cline{ 3- 8}
\nopagebreak\multicolumn{ 1}{|l|}{} & \multicolumn{ 1}{l|}{} & L64\_39 & L64\_28 & L64\_39 & 0 & 40 & 27.3 \\ \cline{ 3- 8}
\nopagebreak\multicolumn{ 1}{|l|}{} & \multicolumn{ 1}{l|}{} & L64\_28 & L64\_32 & L64\_39 & 0 & 40 & 27.3 \\ \cline{ 3- 8}
\nopagebreak\multicolumn{ 1}{|l|}{} & \multicolumn{ 1}{l|}{} & L64\_28 & L64\_39 & L64\_39 & 0 & 40 & 27.3 \\ \cline{ 3- 8}
\nopagebreak\multicolumn{ 1}{|l|}{} & \multicolumn{ 1}{l|}{} & L64\_39 & L64\_39 & L64\_39 & 0 & 40 & 27.4 \\ \cline{ 2- 8}
\pagebreak[3]\multicolumn{ 1}{|l|}{} & \multicolumn{ 1}{c|}{B} & L47-115 & L47-115 & L47-115 & 0(2) & $>$44 & 297.0 \\ \cline{ 3- 8}
\multicolumn{ 1}{|l|}{} & \multicolumn{ 1}{l|}{} & L63-25 & L63-25 & L63-25 & 0(2) & $>$44 & 293.0 \\ \cline{ 3- 8}
\multicolumn{ 1}{|l|}{} & \multicolumn{ 1}{l|}{} & L59 & L59 & L59 & 0(1) & 32 & 27.4 \\ \cline{ 3- 8}
\multicolumn{ 1}{|l|}{} & \multicolumn{ 1}{l|}{} & L63 & L63 & L63 & 0 & 38 & 27.3 \\ \cline{ 2- 8}
\pagebreak[2]\multicolumn{ 1}{|l|}{} & \multicolumn{ 1}{c|}{C} & L63-25 & L59 & L64\_39 & 0(1) & 39 & 113.0 \\ \cline{ 3- 8}
\multicolumn{ 1}{|l|}{} & \multicolumn{ 1}{l|}{} & L63-25 & L64\_39 & L59 & 0 & 32 & 113.0 \\ \cline{ 3- 8}
\multicolumn{ 1}{|l|}{} & \multicolumn{ 1}{l|}{} & L59 & L63-25 & L64\_39 & 0 & 38 & 81.1 \\ \cline{ 3- 8}
\multicolumn{ 1}{|l|}{} & \multicolumn{ 1}{l|}{} & L59 & L64\_39 & L63-25 & 0 & 39 & 158.3 \\ \cline{ 3- 8}
\multicolumn{ 1}{|l|}{} & \multicolumn{ 1}{l|}{} & L64\_39 & L63-25 & L59 & 0 & 31 & 81.0 \\ \cline{ 3- 8}
\multicolumn{ 1}{|l|}{} & \multicolumn{ 1}{l|}{} & L64\_39 & L59 & L63-25 & 0 & 42 & 159.0 \\ \hline
\pagebreak[3]\multicolumn{ 1}{|c|}{Trib} & \multicolumn{ 1}{c|}{A} & L64\_28 & L64\_28 & L64\_28 & 0(2) & 42 & 27.2 \\ \cline{ 3- 8}
\multicolumn{ 1}{|l|}{} & \multicolumn{ 1}{l|}{} & L64\_39 & L64\_28 & L64\_28 & 0 & 43 & 27.1 \\ \cline{ 3- 8}
\multicolumn{ 1}{|l|}{} & \multicolumn{ 1}{l|}{} & L64\_39 & L64\_32 & L64\_28 & 0(1) & 42 & 28.0 \\ \cline{ 3- 8}
\multicolumn{ 1}{|l|}{} & \multicolumn{ 1}{l|}{} & L64\_28 & L64\_39 & L64\_28 & 0(1) & 42 & 28.1 \\ \cline{ 3- 8}
\multicolumn{ 1}{|l|}{} & \multicolumn{ 1}{l|}{} & L64\_32 & L64\_39 & L64\_28 & 0 & 42 & 27.1 \\ \cline{ 3- 8}
\multicolumn{ 1}{|l|}{} & \multicolumn{ 1}{l|}{} & L64\_39 & L64\_39 & L64\_28 & 0(1) & 42 & 27.2 \\ \cline{ 3- 8}
\multicolumn{ 1}{|l|}{} & \multicolumn{ 1}{l|}{} & L64\_39 & L64\_28 & L64\_32 & 0 & 43 & 27.1 \\ \cline{ 3- 8}
\multicolumn{ 1}{|l|}{} & \multicolumn{ 1}{l|}{} & L64\_28 & L64\_39 & L64\_32 & 0(1) & 42 & 27.1 \\ \cline{ 3- 8}
\multicolumn{ 1}{|l|}{} & \multicolumn{ 1}{l|}{} & L64\_28 & L64\_28 & L64\_39 & 0 & 42 & 28.0 \\ \cline{ 3- 8}
\multicolumn{ 1}{|l|}{} & \multicolumn{ 1}{l|}{} & L64\_32 & L64\_28 & L64\_39 & 0 & 42 & 27.2 \\ \cline{ 3- 8}
\multicolumn{ 1}{|l|}{} & \multicolumn{ 1}{l|}{} & L64\_39 & L64\_28 & L64\_39 & 0(1) & 43 & 27.1 \\ \cline{ 3- 8}
\multicolumn{ 1}{|l|}{} & \multicolumn{ 1}{l|}{} & L64\_28 & L64\_32 & L64\_39 & 0 & 43 & 27.1 \\ \cline{ 3- 8}
\multicolumn{ 1}{|l|}{} & \multicolumn{ 1}{l|}{} & L64\_28 & L64\_39 & L64\_39 & 0(2) & 42 & 27.3 \\ \cline{ 3- 8}
\multicolumn{ 1}{|l|}{} & \multicolumn{ 1}{l|}{} & L64\_39 & L64\_39 & L64\_39 & 0 & 43 & 27.1 \\ \cline{ 2- 8}
\multicolumn{ 1}{|l|}{} & \multicolumn{ 1}{c|}{B} & L47-115 & L47-115 & L47-115 & 1 & $>$44 & 299.0 \\ \cline{ 3- 8}
\multicolumn{ 1}{|l|}{} & \multicolumn{ 1}{l|}{} & L63-25 & L63-25 & L63-25 & 0(1) & $>$44 & 298.0 \\ \cline{ 3- 8}
\multicolumn{ 1}{|l|}{} & \multicolumn{ 1}{l|}{} & L59 & L59 & L59 & 0 & 35 & 27.2 \\ \cline{ 3- 8}
\multicolumn{ 1}{|l|}{} & \multicolumn{ 1}{l|}{} & L63 & L63 & L63 & 0(1) & 41 & 27.2 \\ \cline{ 2- 8}
\multicolumn{ 1}{|l|}{} & \multicolumn{ 1}{c|}{C} & L63-25 & L59 & L64\_39 & 0(1) & 39 & 172.0 \\ \cline{ 3- 8}
\multicolumn{ 1}{|l|}{} & \multicolumn{ 1}{l|}{} & L63-25 & L64\_39 & L59 & 0(1) & 41 & 173.0 \\ \cline{ 3- 8}
\multicolumn{ 1}{|l|}{} & \multicolumn{ 1}{l|}{} & L59 & L63-25 & L64\_39 & 0 & 35 & 106.0 \\ \cline{ 3- 8}
\multicolumn{ 1}{|l|}{} & \multicolumn{ 1}{l|}{} & L59 & L64\_39 & L63-25 & 0 & 34 & 70.5 \\ \cline{ 3- 8}
\multicolumn{ 1}{|l|}{} & \multicolumn{ 1}{l|}{} & L64\_39 & L63-25 & L59 & 0 & 41 & 107.0 \\ \cline{ 3- 8}
\multicolumn{ 1}{|l|}{} & \multicolumn{ 1}{l|}{} & L64\_39 & L59 & L63-25 & 0(1) & 40 & 74.3 \\ \hline


\end{longtable}

We can make the following observations based on the results in statistical tests:
\begin{enumerate}
\item The quality of LCGs has improved substantially when we combined them according to infinite words with the WELLDOC property. This can be seen in the TestU01 BigCrush results. While for LCGs $13$ to $19$ tests have clearly failed (the only exception is the generator L63-25 with two failures -- see Table~\ref{table:LCG}), almost all of the BigCrush tests passed. The worst result was to have one BigCrush test failed for the Tribonacci combination and one for the Fibonacci combination of L47-115 generators. The likely reason is that the generator L47-115 has the shortest period of all tested LCGs.
\item The results of the PractRand battery confirm the above findings. For instance, in the case of LCGs with modulo $2^{64}$, the test started to find irregularities in the distribution of the least significant bit of tested PRNGs output at around $2$TB sample size. Compare it with the sample size of $8$GB to $32$GB when fast plain LCGs started to fail the test. The PractRand battery applies different filters on the input stream and all failures appeared for {\tt Low1/32} filter where only the least significant bit of the PRNG output is used. It corresponds to a known weakness of power-of-2 modulo LCGs: lower bits of the output have significantly smaller period than the LCG itself.
    The quality of the PRNGs can be therefore further improved by combining LCGs that do not show flaws for the least significant bits or by using for example just $16$ upper bits of the LCGs output.
\item The quality of the PRNG is linked to the quality of the underlying LCG. When looking at the group B in Table~\ref{table:APRNG_Table}, we observe that the PractRand results of the arising PRNGs are closely related to the succes of LCGs from Table~\ref{table:LCG} in the PractRand tests.
\item Another interesting observation is that using the instances of the same LCG (with only sufficiently distinct seeds) produces as good results as combination of different LCGs (multipliers and shifts are different, but the modulus is the same). It is just important to make sure that starting states of the LCGs are far apart enough. Refer to the group A in Table~\ref{table:APRNG_Table}.
\item The lower quality LCG dictates the quality of resulting PRNG. When mixing LCGs with different quality, use better ones as replacement for more frequent letters in the aperiodic word.
%

 Please refer to the group C in Table~\ref{table:APRNG_Table}. For example for the Fibonacci word compare first two rows in the group C - the order of LCGs is merely swapped but the difference in the sample size for which PractRand starts to fail is $8\times$. This is even more significant for the Tribonacci based generators where the difference between the worst and best PractRand results when reordering the underlying LCGs is given by factor $128\times$.
\item On the other hand, results from the group A in Table~\ref{table:APRNG_Table} demonstrate that when using generators of similar quality (same modulus, similar deficiencies), the order in which generators are used to substitute the letters of the infinite word does not influence the quality of the resulting generator.
\item We can also see that the modified Fibonacci word (see Remark~\ref{modifiedFib}) does not produce better results than the Fibonacci word. Clearly, a regular structure of $2$'s on every other position does not help to produce a~better random sequence even if we mix now three LCGs instead of two as in the case of the Fibonacci word.
\item Results for the Tribonacci word are better than for the Fibonacci word. (We have observed this fact for all ternary Arnoux-Rauzy words in comparison to Sturmian words.) It seems therefore that mixing three LCGs is better than using just two LCGs, assuming that an infinite word with the WELLDOC property is used for mixing. We expect naturally that the better chosen LCGs (or even some other modern fast linear PRNGs,  e.g. \emph{mt19937} or nonlinear PRNGs based on the AES cipher) we combine according to an infinite word with the WELLDOC property, the better their results in statistical tests will be.
\item We have also tested LCGs with $m=2^{31}-1$. It has revealed that if the underlying generators have poor statistical properties, then the PRNG will not be able to mask it. In particular, you cannot expect that PRNGs -- despite their infinite aperiodic nature -- will fix the short period problem. Once the period of the underlying LCG is exhausted, statistical tests will find irregularities in the output of the PRNG.
\end{enumerate}

In conclusion, we summarize the main results from the user point
of view:
\begin{itemize}
\item Using different instances of the same LCG to form a~new generator based on the infinite word with the WELLDOC property gives a generator with improved statistical properties.
\item The introduced method of generation of morphic words is very fast and supports parallel processing.
\item The period of underlying generators has to be large enough -- much larger than the number of needed values.
\item When using different types of the underlying LCGs to form a~PRNG, close attention has to be paid to the right
order of the combined LCGs. The generator with the worst
properties should be used to replace the least frequent letter of
the aperiodic word. Moreover, statistical properties of the
resulting PRNG are ruled by the deficiencies of the worst used
generator.
\item We have used the LCGs only for study reasons.
Instead of LCGs, the modern generators (of user choice) could be
used as underlying PRNG to obtain better results. We have done
testing with two instances (respectively three for the Tribonacci
and other Arnoux-Rauzy words) of Mersenne twister $19937$ as the
underlying generator. The newly constructed generator has passed
all the empirical tests on randomness we have executed (in
contrary to Mersenne twister $19937$ itself which is failing two
tests from TestU01's BigCrush battery). For the practical usage
Arnoux-Rauzy (AR) words are very appealing since there is an
infinite number of AR words and we have implementation in place to
create the AR words based on user input (it can be sought of as
the seed). Thus, we recommend to create new PRNGs based on one's
favorite modern PRNGs and the custom AR word.
\end{itemize}

\section{Open problems and future research}
Concerning the combinatorial part of our paper, one of the
interesting open questions there is finding large families of
infinite words satisfying the WELLDOC property. For example, which
morphic words have the WELLDOC property? Also, it seems to be
meaningful to study a weaker WELLDOC property where in
Definition~\ref{comb_cond} instead of every $m \in \mathbb N$ we
consider only a particular $m$. For instance, one can search for
words satisfying such a modified WELLDOC condition for $m=2$,
$m=2^{\ell}$ etc. Another question to be asked is how to construct
words with the WELLDOC property over larger alphabets using words
with such a property over smaller alphabets. Regarding statistical
tests, it remains to explain why PRNGs based on infinite words
with the WELLDOC property succeed in tests and to compare their
results with other comparably fast generators.

\section*{Acknowledgements}
The first author was supported by the Czech Science Foundation
grant GA\v CR 13-03538S, and thanks L'Or\'eal Czech Republic for the Fellowship Women in
Science.
The third author was partially supported by the Italian Ministry of Education (MIUR), under the PRIN 2010--11 project ``Automi e Linguaggi Formali: Aspetti Matematici e Applicativi''.
The fifth author was supported in part by
the Academy of Finland under grant 251371 and by Russian
Foundation of Basic Research (grants 12-01-00089 and 12-01-00448).


\bibliographystyle{amsplain}

\end{document}